\numberwithin{equation}{section}
\renewcommand\d{\partial}
\newcommand{\R}{\mathbb R}
\newcommand{\C}{\mathbb C}
\newcommand\nn{\nonumber}
\newcommand\LA{\left\langle}
\newcommand\RA{\right\rangle}
\newcommand{\sign}{{\rm sgn }}
\newcommand{\Span}{{\rm span}}
\newcommand{\RM}{{\mathbb{R}}}
\newcommand{\CM}{{\mathbb{C}}}
\newtheorem{theorem}{Theorem}[section]
\newtheorem{proposition}[theorem]{Proposition}
\newtheorem{lemma}[theorem]{Lemma}
\newtheorem{remark}[theorem]{Remark}
\theoremstyle{definition}
\newtheorem{assumption}[theorem]{Assumption}
\crefname{assumption}{assumption}{assumptions}
\Crefname{appsec}{appendix}{appendices}
\title{Rigorous justification of the Whitham modulation equations for equations of Whitham-type}
\author{William A. Clarke\thanks{School of Mathematics and Statistics, University of Sydney, NSW 2006, Australia; \texttt{wcla7359@uni.sydney.edu.au}}
\quad Robert Marangell\thanks{School of Mathematics and Statistics, University of Sydney, NSW 2006, Australia; \texttt{robert.marangell@sydney.edu.au}}
\quad Wesley R. Perkins\thanks{Department of Mathematics, Lehigh University, 17 Memorial Drive East, Bethlehem, PA 18015, USA; \texttt{wrp221@lehigh.edu}}
}
\date{\today}
\begin{document}

\maketitle






\begin{abstract} \noindent We prove that the modulational instability criterion of the formal Whitham modulation theory agrees with the spectral stability of long wavelength perturbations of periodic travelling wave solutions to the generalized Whitham equation. We use the standard WKB procedure to derive a quasi-linear system of three Whitham modulation equations, written in terms of the mass, momentum, and wave number of a periodic travelling wave solution. We use the same quantities as parameters in a rigorous spectral perturbation of the linearized operator, which allows us to track the bifurcation of the zero eigenvalue as the Floquet parameter varies. We show that the hyperbolicity of the Whitham system is a necessary condition for the existence of purely imaginary eigenvalues in the linearized system, and hence also a prerequisite for modulational stability of the underlying wave. Since the generalized Whitham equation has a Hamiltonian structure, we conclude that strict hyperbolicity is a sufficient condition for modulational stability.
\end{abstract}

\section{Introduction}\label{S:intro}

We consider the modulational stability of periodic travelling wave solutions to the generalized Whitham equation
\begin{align}\label{e:Whitham}
    u_t + f(u)_x + \int_{\R}K(\xi)u_x(x-\xi,t)d\xi = 0.
\end{align}
This equation was originally posed by Whitham \cite{Whitham1967,Whitham1999} as a way of modelling both the full linear dispersion of water waves and the breaking and peaking effects from the shallow water theory. Defining the convolution as the operator $ \mathcal{K}* $, the solutions $ u(x,t) = e^{i(kx -\omega t)} $ will solve
\begin{align*}
	u_{t} + \mathcal{K}*u_{x} = 0
\end{align*}
provided that 
\begin{align*}
	c &= \frac{\omega}{k} = \int_{\R}K(\xi)e^{-ik\xi}d\xi.
\end{align*}
This means that we can choose an arbitrary phase velocity $ c(k) $ by requiring $ K(\xi) $ to be its Fourier transform:
\begin{align*}
	K(x) &= \frac{1}{2\pi}\int_{\R}c(k)e^{ikx}dk.
\end{align*}
The kernel from the water wave problem corresponds to the choice
\begin{align}
	c_{W}(k) = \sqrt{\frac{\tanh k}{k}}, \label{eq:WhithamWater}
\end{align}
and a second order approximation yields
\begin{align*}
	c_{KdV}(k) = 1 - \frac{1}{6}k^{2} + {\rm O}(k^{4}).
\end{align*}
With this choice, the kernel becomes
\begin{align*}
	K_{KdV}(x) = \delta(x) + \frac{1}{6}\delta''(x),
\end{align*}
which corresponds to the KdV equation:
\begin{align}
	u_{t} + f(u)_{x} + u_{x} + \frac{1}{6}u_{xxx} = 0.\label{eq:KdV}
\end{align}
The symbol $ c_{KdV} $ is not a good approximation of $ c_{W} $ for large wavenumbers, perhaps explaining why \Cref{eq:KdV} fails to capture some behaviour of water waves such as breaking and peaking, even if it does support solitary waves. On the other hand, proofs of wave breaking \cite{Hur2017}, peaking \cite{EW2019} and solitary wave solutions \cite{EGW12}\footnote{In fact, the authors prove the existence of solitary waves for a large class of Whitham-type equations with $ c(k) $ even and nonlinearity $ f(x) \sim |x|^{p} $.} have all been established for \Cref{e:Whitham} with $ c = c_{W} $. A number of other local and nonlocal equations take the form of \Cref{e:Whitham}, including (but not limited to): Burgers' equation $ c(k) = ik $; the Kawahara equation $ c(k) = -k^{2} + k^{4} $; the intermediate long wave equation $ c(k) = k \coth(\delta k) - \frac{1}{\delta} $ (for a constant $ \delta > 0 $); and the Benjamin-Ono equation $ c(k) = k\;\sign\; k $.

This paper focuses on the modulational stability of periodic travelling wave solutions of \Cref{e:Whitham}, namely their spectral stability to long wavelength perturbations. Whitham modulation theory provides a formal procedure for deriving a modulational instability criterion by considering the linear stability of a modulational approximation of the original PDE written in terms of conserved quantities (or wave parameters) \cite{Whitham1999,Whitham1970,W65}.  Whitham modulation theory has been widely used to study many physical phenomena, such as dispersive shock waves (DSWs); for a survey of recent results, see \cite{EH2016}.  Additionally, one topic of current interest is the ability to use DSW fitting and the system of equations provided by Whitham's theory to find DSW solutions, including when the system cannot be set in Riemann invariant form, such as the Boussinesq Benjamin-Ono and Benjamin-Bona-Mahony equations, see \cite{CEHS2021,NS2021}, respectively.
At present, there is no proof of the agreement of Whitham's formal theory with a rigorous spectral analysis at the origin for a general PDE. There is, however, a growing body of evidence that general dispersive PDEs allow these two theories to coincide. In particular, proofs exist for: systems of viscous conservation laws \cite{OZ2006,Serre05}; the generalized KdV equation \cite{JZ2010}; the nonlinear Klein-Gordon equation \cite{JMMP2014}; systems of non-dissipative, local Hamiltonian equations \cite{BNR14}; a viscous fluid conduit equation \cite{JP2020}; and the generalized nonlinear Schr{\"o}dinger equation \cite{CM2021}. The primary goal of this paper is to present the first proof of the agreement between Whitham modulation theory and spectral stability to long wavelength perturbations for the generalized (nonlocal) Whitham \Cref{e:Whitham}, a primary benefit of which is the concise justification of Whitham modulation theory for all equations that can be expressed in the form \Cref{e:Whitham} satisfying the following assumptions. 
\subsection*{Assumptions}
\begin{assumption}
	$ K(\xi) \in L^{1}(\R)$ is an even, real-valued function which satisfies $ \xi K(\xi), \xi^{2}K(\xi),  \xi^{3}K(\xi) \in L^{1}(\R)$. Further, $ \widehat{K}(q) = c(q) = \frac{\Omega(q)}{q} $, where $ \Omega(q) $ is an odd, real-valued function. \label{assumption:a1}
\end{assumption}
\begin{assumption}
	\label{assumption:a2}  $f(u)$ is smooth.
\end{assumption}
\begin{assumption}
	\label{assumption:a3}  There exists a three parameter family of translation-invariant $2\pi$-periodic travelling wave solutions $\phi(\theta + x_0;{\bf p})$, where $\theta=k(x -ct)$ is the traveling coordinate with wavenumber $k$ and wavespeed $c=c({\bf p})$. We define ${\bf p} := (k, M, P)$ where $ M,P $ are the mass and momentum of $ \phi $ over a period (defined in \Cref{e:M,e:P} respectively), and we suppose that $ {\bf p} \in U\subseteq \R^3$ for some open set $ U $. 
\end{assumption}
\begin{assumption}\label{assumption:a4}We fix $x_0$ so that $\phi$ is an even function. We further assume that $ \phi $ is non-constant and $\ker_{\rm per}(\mathcal{L}[\phi]) = \Span\{\phi'\}$, where $ \mathcal{L}[\phi] $ is the linear operator defined in \Cref{eq:linearoperators}.
\end{assumption}

\Cref{assumption:a1,assumption:a2,assumption:a3} are inspired by \cite{BIHS2021}, with the modification that \Cref{assumption:a1} is weaker than the corresponding assumption in \cite{BIHS2021}, where the analyticity of $ \Omega(q) $ in fact implies that $ K(\xi) $ is a function of rapid decay. We also only use \Cref{assumption:a2} implicitly when we cite \cite{BIHS2021}.

\Cref{assumption:a1,assumption:a2,assumption:a3} are satisfied by many equations including the Whitham equation, generalized KdV, the Fornberg-Whitham equation, and the intermediate long wave equation. Note that in the case of KdV and Fornberg-Whitham, $ K(\xi) $ is a function of $ \delta(\xi) $ and its derivatives, which are (technically) not in $ L^{1}(\R) $. However, since their integral over $ \R $ exists in a distributional sense, we can (and do) relax the Lebesgue integrability assumption to an assumption that the appropriate integrals exist. Periodic travelling wave solutions exist for the Whitham equation with $ c_{W} $ defined in \Cref{eq:WhithamWater} \cite{EK09,EK13}. Proving existence for \Cref{e:Whitham} lies beyond the scope of this paper, although the authors of \cite{BH14} argue that the existence of periodic travelling waves often follows from variational arguments for the particular equations concerned. Lastly, we make \Cref{assumption:a4} in line with \cite[Assumption 5.N2]{BH14} as it allows for a non-degenerate characterization of the kernel of the linearized operator  and is satisfied by the linearized operators for many equations. We omit an assumption similar to \cite[Assumption 5.N3]{BH14}, namely that $ M_{c}P_{b} - M_{b}P_{c} \neq 0 $ ($ b $ is defined in \Cref{e:profile2}), since a similar condition appears in the proof of \Cref{P:gkerper} as a {\it consequence} of \Cref{assumption:a4}.

\begin{remark}
    To put \Cref{assumption:a3} into a larger context, note that, in many applications, existence can be proven in terms of the mathematical parameters $a$, $E$, and $c$, where $a$ and $E$ are constants of integration arising from reducing the profile equation to quadrature, see for example \cite{JP2020,JZ2010}.  In such papers, the results require the assumption
    \[
    \pdv{(k,M,P)}{(a,E,c)} = -\frac{1}{T^2}\pdv{(T,M,P)}{(a,E,c)}\neq0,
    \]
    where $T=1/k$ is the period of the background wave.  (Note that this assumption is a natural extension of the assumption that $\pdv{P}{c}>0$ when studying the stability of solitary waves.)  The Implicit Function Theorem then guarantees that we can reparametrize any waves shown to exist in the $(a,E,c)$ coordinate system in terms of the natural Whitham parameters $(k,M,P)$.  In general, there needs to be some way of comparing the results of the existence theory, which is usually executed in terms of the mathematical parameters availalble, with the results of Whitham's theory, which is usually parametrized by the wave number, mass, and momentum.  In this paper, we assume that this reparametrization has already occurred in \Cref{assumption:a3}.  
\end{remark}

\begin{remark}
    Throughout the remainder of our paper, we will work almost exclusively from the perspective that $\mathcal{K}*$ is a convolution operator, rather than the equivalent notion of $\mathcal{K}*$ as a Fourier multiplier.  A motivation for this decision is that we are able to weaken the assumptions of \cite{BIHS2021} and thereby extend their result to include a wider class of dispersive equations.  This is done in \Cref{appendix:aA} through the careful tracking of remainder estimates in the use of Taylor's theorem.  As another motivation, the derivation of the Whitham modulation equations using Fourier multipliers unfortunately breaks down when the equations are elliptic, but a similar break down does not occur when considering convolution operators.  While this isn't relevant to modulational stability (since we want the equations to be hyperbolic), we still note that it broadens the derived Whitham modulation equations to possibly be elliptic.  
\end{remark}
\medskip

The outline of the paper is as follows.  In \Cref{S:prelim}, we give some preliminary  results that allow us to more completely understand the manifold of $2\pi$-periodic solutions given in \Cref{assumption:a3}.  In \Cref{S:WModEqns}, we express the system of Whitham modulation equations in quasilinear form and state our main result, \Cref{T:main}.  In Section \ref{S:RigorousTheory}, we prove \Cref{T:main} through the use of rigorous spectral perturbation theory.  More specifically, we analyze the spectrum of the linearized operator associated to \Cref{e:Whitham} near the origin of the spectral plane, which we then connect to the quasilinear Whitham modulation equations.  Finally in \Cref{appendix:aA} , we provide a rederivation of the Whitham modulation equations using a weaker set of assumptions than those given in \cite{BIHS2021}.
\medskip

\noindent
{\bf Acknowledgments:} The authors would like to thank Prof. Mat Johnson for his help to facilitate our collaboration and his insightful comments on spectral perturbation theory. WC acknowledges the support of the Australian Research Council under grant DP200102130. RM acknowledges the support of the Australian Research Council under grant DP210101102.

\section{Preliminaries}\label{S:prelim}
In this section, we outline the properties of $ \mathcal{K}* $ which allow us to parametrize the manifold of $ 2\pi $-periodic travelling wave solutions with the quantities $ k,M,P $.
\begin{lemma}\label{L:Kprop}
When acting on $ L^2_{\rm per}[0,2\pi) $, the operator $\mathcal{K}*$ is symmetric and commutes with the differential operators $\d_x,\d_{t}$.
\end{lemma}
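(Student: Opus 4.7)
The plan is to verify each of the three properties by direct computation, relying on Fubini's theorem and differentiation under the integral, both of which are licensed by $K \in L^1(\R)$ from \Cref{assumption:a1}.

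For symmetry, I would start with $u,v \in L^2_{\rm per}[0,2\pi)$ and compute
\[
\langle \mathcal{K}*u, v\rangle = \int_0^{2\pi}\!\int_\R K(\xi)\,u(x-\xi)\,\overline{v(x)}\,d\xi\,dx.
\]
Since $|K(\xi)u(x-\xi)\overline{v(x)}|$ is integrable on $[0,2\pi)\times\R$ (by Cauchy--Schwarz in $x$ combined with $\|K\|_{L^1}<\infty$), I would apply Fubini, then translate $x\mapsto x+\xi$ inside the $x$-integral, using $2\pi$-periodicity of $u,v$ to restore the domain to $[0,2\pi)$. The result is
\[
\int_\R K(\xi)\int_0^{2\pi}\! u(x)\,\overline{v(x+\xi)}\,dx\,d\xi,
\]
and a final change of variable $\xi\mapsto-\xi$ together with evenness of $K$ yields $\langle u,\mathcal{K}*v\rangle$, proving symmetry.

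For the commutation with $\partial_x$, I would first verify the identity on a dense subspace of smooth periodic functions, where differentiating under the integral sign is trivially justified by $K \in L^1(\R)$ and boundedness of $u_x$: $\partial_x (\mathcal{K}*u)(x) = \int_\R K(\xi)\,\partial_x u(x-\xi)\,d\xi = (\mathcal{K}*\partial_x u)(x)$. Since $\mathcal{K}*$ is bounded on $L^2_{\rm per}$ (by Young's inequality, using $K \in L^1$), the identity extends by density to the maximal $L^2_{\rm per}$-domain of $\partial_x$. The commutation with $\partial_t$ is identical, since $K$ is independent of $t$ and the $t$-derivative simply passes through the convolution in $x$.

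I do not anticipate a genuine obstacle: the only mildly technical step is keeping the Fubini/change-of-variable argument clean when using periodicity to fold the $x$-integration back to a single period, and reducing to smooth functions before differentiating under the integral. Both are standard once one has $K \in L^1(\R)$ and evenness in hand from \Cref{assumption:a1}.
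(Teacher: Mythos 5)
Your proof is correct, but the symmetry argument takes a genuinely different route from the paper's. You work entirely in physical space: Fubini (justified by $K\in L^1(\R)$ plus Cauchy--Schwarz), a translation $x\mapsto x+\xi$ folded back to one period, and then $\xi\mapsto-\xi$ using the evenness and real-valuedness of $K$. The paper instead computes the Fourier coefficients of $\mathcal{K}*\phi$, identifies the multiplier $\Omega(n)/n$, and invokes Parseval, with symmetry following because $\Omega(n)/n$ is real. The two are equivalent for $K\in L^1$, but the Fourier-side argument buys something the paper explicitly wants: it survives when $K$ is only a distribution (KdV, Fornberg--Whitham, Benjamin--Ono), where your Fubini and change-of-variables steps would have to be reinterpreted; it also produces the symbol $\Omega(n)/n$, which is reused later. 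Your version, in exchange, is more elementary and makes transparent exactly where evenness of $K$ enters. For the commutation with $\partial_x$ and $\partial_t$, your approach is essentially the paper's (differentiation under the integral licensed by $K\in L^1$), with the added refinement of a density argument from smooth periodic functions using the $L^2_{\rm per}$-boundedness of $\mathcal{K}*$; the paper instead appeals directly to dominated convergence with the dominating function $\max_x\phi_x(x,t)$ and separately flags the non-Lebesgue cases. Both are fine; just be aware that your $L^1$-based symmetry proof is slightly narrower in scope than what the paper ultimately relies on.
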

\begin{proof}
	To prove that $ \mathcal{K}* $ is symmetric, first let $ u(x,t),v(x,t) \in L^{2}_{\rm per}[0,2\pi)\times C^{1}$. Further, for $ \phi(x,t) $, a $ 2\pi $-periodic function in $ x $,  we write its Fourier series expansion as
	\begin{align*}
		\phi(x,t) &= \sum_{n=-\infty}^{\infty}\phi_{n}(t)e^{inx}\\
		\phi_{n} &= \frac{1}{2\pi}\int_{-\pi}^{\pi}\phi(x,t)e^{-inx}dx.
	\end{align*}
	The convolution $ \mathcal{K}*\phi $ is $ 2\pi $-periodic in $ x $, so we calculate its Fourier coefficients as
	\begin{align*}
		c_{n}(t) &= \frac{1}{2\pi}\int_{-\pi}^{\pi}\int_{\R}K(\xi)\phi(x-\xi,t)e^{-inx}d\xi dx\\
		&= \int_{\R}K(\xi)\phi_{n}(t)e^{-in\xi}d\xi\\
		&= \frac{\Omega(n)}{n}\phi_{n}(t).
	\end{align*}
	So by Parseval's theorem (with the standard $ L^{2}[0,2\pi) $ inner product) we have
	\begin{align*}
		\LA u,\mathcal{K}*v \RA &= 2\pi\sum_{n=-\infty}^{\infty}u_{n}(t)\overline{\frac{\Omega(n)}{n}v_{n}(t)}\\
		&= 2\pi\sum_{n=-\infty}^{\infty}\frac{\Omega(n)}{n}u_{n}(t)\overline{v_{n}(t)}\\
		&= \LA \mathcal{K}*u,v \RA.
	\end{align*}
	To justify $ \partial_{x}\mathcal{K}*\phi = \mathcal{K}*\phi_{x} $, we note that $ g(t) = \max_{x\in[0,2\pi)}\phi_{x}(x,t) $ dominates $ \phi_{x}(x,t) $. If $ K(\xi) \in L^{1}(\R) $, then the dominated convergence theorem gives us the desired result. There are separate arguments for the cases where the convolution is not a Lebesgue integral, for instance the Benjamin-Ono equation. Under appropriate hypotheses, the integral will converge uniformly and the derivative operator $ \partial_{x} $ may be interchanged with the integral (cf. \cite{King09} for a discussion pertaining to the Hilbert transform). The same argument also applies to the derivative operator $ \partial_{t} $, by noting that we are interested in stationary solutions to  \Cref{eq:stationary} so we may take $ u(x,t) $ to be bounded in $ t $.
\end{proof}

For $u\in L^2_{\rm per}[0,2\pi)$, \Cref{e:Whitham} admits the following conserved quantities \cite{BH14,NS1994}:
\[
M = \int_0^{2\pi} u\, dx, \qquad P = \int_0^{2\pi} \frac{1}{2}u^2\, dx,\qquad H = \int_0^{2\pi} \left[F(u) + \frac{1}{2}u\mathcal{K}*u\right]\,dx,
\]
where $F' = f$ . The existence of a Hamiltonian allows us to write \Cref{e:Whitham} in the variational form
\[
u_t = \mathcal{J} \fdv{H}{u}()(u), \qquad \mathcal{J}=\d_x,
\]
whence the results of \cite{BH14} apply. However, by \Cref{assumption:a3} we consider a $ 2\pi $-periodic family of travelling wave solutions parametrized by $ (k,M,P) $, whereas the authors of \cite{BH14} use the parameters $ (c,a,T) $ where $ a $ is a constant of integration and $ T $ is the period of a travelling wave solution. We make our choice because the derivation of the Whitham modulation equations involves fixing the period, and $ (k,M,P) $ are a particularly convenient set of variables given that the Whitham modulation equations describe the long-time evolution of the parameters $(k,M,P)$.

Returning to \Cref{e:Whitham}, \Cref{assumption:a3} implies that $\phi(\theta;k,M,P)$ is a $2\pi$-periodic stationary solution to the equation
\begin{equation}\label{eq:stationary}
	u_t - kc u_\theta + k[f(u)]_\theta + \mathcal{K}*(ku_\theta) = 0.
\end{equation} 
Note that the convolution has the form:
\begin{align}
	\mathcal{K}*u_\theta &= \int_{\R}K(\xi)u_{\theta}(\theta-k\xi)d\xi. \label{eq:thetaconv}
\end{align}
We conclude that $\phi$ satisfies the profile equation
\begin{equation}\label{e:profile}
	k\d_\theta\left[-c\phi + f(\phi) + \mathcal{K}*\phi\right]=0,
\end{equation}
and integrating once yields
\begin{equation}\label{e:profile2}
    -kc\phi + kf(\phi) + k\mathcal{K}*\phi = b
\end{equation}
for some constant of integration $b$ which is parametrized by $ (k,M,P) $.  
The conserved quantities may now be expressed as
\begin{equation}\label{e:M}
    M = \int_0^{2\pi} \phi(\theta;k,M,P)\,d\theta
\end{equation}
and
\begin{equation}\label{e:P}
    P = \int_0^{2\pi} \frac{1}{2}\phi^2(\theta;k,M,P)\,d\theta.
\end{equation}

\section{The Whitham Modulation Equations}\label{S:WModEqns}
The procedure for deriving the Whitham modulation equations involves substituting the ansatz
\begin{align*}
	u(x,t) &= u_{0}(\theta,X,T) + \epsilon u_{1}(\theta,X,T) + \dots,\qquad \theta = \frac{1}{\epsilon}\psi(X,T), X=\epsilon x, T=\epsilon t
\end{align*}
into \Cref{e:Whitham} and collecting terms at each power of $ \epsilon $. Specifically, the first order modulation equations are the solvability conditions for the linearized operator of the travelling wave ODE as determined by the Fredholm alternative; this approach is equivalent to the suppression of secular terms in the expansion, or the application of the variational principle to an averaged Lagrangian \cite{L66,Whitham1970,Whitham1999}. At O$ (1) $, $ u_{0} $ satisfies the travelling wave ODE \Cref{e:profile}, so by \Cref{assumption:a3} we restrict its form to:
\begin{align*}
	u_{0}(\theta,X,T) &= u_{0}(\theta,k(X,T),M(X,T),P(X,T)).
\end{align*}
In \cite{BIHS2021}, the authors derive the first order Whitham modulation equation under the assumption that $ \Omega $ is analytic. We demonstrate an alternative but equivalent derivation in \Cref{appendix:aA} which does not rely on the analyticity of $ \Omega $. The Whitham modulation equations are governed by the system \cite{BIHS2021}
\begin{equation}\label{e:whitmodsystem}
\begin{cases}
k_T = -(kc)_X\\
M_T = -\LA 1, f(u_0)_{X} + \mathcal{K}*u_{0X}\RA\\
P_T = -\LA u_{0}, f(u_{0})_{X}\RA - \frac{1}{2}\LA u_0, \mathcal{K}^{(1)}*u_{0X} + (\mathcal{K}^{(1)}*u_{0})_{X}\RA
\end{cases},
\end{equation}
where $\mathcal{K}^{(1)}$ is a Fourier multiplier defined by the symbol
\begin{equation}\label{e:K1}
(\widehat{\mathcal{K}^{(1)}*g})(q) := \Omega'(kq)\widehat{g}(q).
\end{equation}
Alternatively, we can treat $\mathcal{K}^{(1)}$ as the convolution
\[
	\mathcal{K}^{(1)}*g = \int_{\R}-\xi K'(\xi)g(\theta - k\xi,X,T)d\xi,
\]
where we understand $ K'(\xi) = -K(\xi)\partial_{\xi} $ in a distributional sense. Using integration by parts, we see that
\begin{align}
	\mathcal{K}^{(1)}*g &= \int_{\R}K(\xi)g(\theta - k\xi,X,T) - k\xi K(\xi)g_{\theta}(\theta - k\xi,X,T)d\xi\nn\\
	&= \mathcal{K}*g - k\mathcal{K}_{1}*g_{\theta},\nn\\
	\mathcal{K}_{1}*\phi &:= \int_{\R}\xi K(\xi)\phi(\theta-k\xi,X,T)d\xi. \label{eq:K_1}
\end{align}
Similarly to the proof of \Cref{L:Kprop}, we note that $\mathcal{K}^{(1)}*$ is a symmetric operator. Our aim is to use the chain rule
\begin{align}
	\partial_{X} = k_{X}\partial_{k} + M_{X}\partial_{M} + P_{X}\partial_{P} \label{eq:chainrule}
\end{align}
in \Cref{e:whitmodsystem} in order to express this system in the quasilinear form
\begin{equation}\label{e:whitmod_quasilinear}
\left(\begin{array}{c}
    k \\
    M \\
    P
\end{array}\right)_T = {\bf D}(u_0)
\left(\begin{array}{c}
    k \\
    M \\
    P
\end{array}\right)_X.
\end{equation}
The only term which requires particular attention is:
\begin{align*}
	(\mathcal{K}^{(1)}*u_{0})_{X} &= \partial_{X}\left(\mathcal{K}*u_{0} - k\mathcal{K}_{1}*u_{0\theta}\right)\\
	&= \partial_{X}\int_{\R}K(\xi)u_{0}(\theta - k\xi,X,T) - k\xi K(\xi)u_{0\theta}(\theta - k\xi,X,T)d\xi\\
	&= \int_{\R}K(\xi)\left(-k_{X}\xi u_{0\theta} + u_{0X}\right) - k_{X}\xi K(\xi)u_{0\theta} - k\xi K(\xi)\left(-k_{X}\xi u_{0\theta\theta} + u_{0\theta X}\right)d\xi\\
	&= \mathcal{K}*u_{0X} - 2k_{X}\mathcal{K}_{1}*u_{0\theta} - k\mathcal{K}_{1}*u_{0\theta X}+ kk_{X}\mathcal{K}_{2}*u_{0\theta\theta}
\end{align*}
where we define
\begin{align}
	\mathcal{K}_{2}*g &:= \int_{\R}\xi^{2}K(\xi)g(\theta - k\xi,X,T)d\xi.\label{eq:K_2}
\end{align}
Hence we may write
\begin{align*}
	\mathcal{K}^{(1)}*u_{0X} + (\mathcal{K}^{(1)}*u_{0})_{X} &= 2\mathcal{K}*u_{0X} - 2k\mathcal{K}_{1}*u_{0\theta X} - 2k_{X}\mathcal{K}_{1}*u_{0\theta} + kk_{X}\mathcal{K}_{2}*u_{0\theta\theta}.
\end{align*}
We intend to linearize this system around the travelling wave profile $ \phi(\theta;k_{0},M_{0},P_{0}) $, and so we can justify the commutativity of $ \partial_{X} $ and the integral by restricting $ u_{0} $ to be bounded in $ X $ with continuous second order derivatives. This is discussed in more detail in \Cref{lemma:app1}. Importantly, the above calculation agrees with the results in \Cref{appendix:aA} which do not rely on boundedness, and so we proceed with this expansion. We now use the chain rule in \Cref{eq:chainrule}, so that
\begin{align*}
	u_{0X} &= k_{X}u_{0k} + M_{X}u_{0M} + P_{X}u_{0P},
\end{align*}
and similarly for higher order derivatives. Upon substitution into \Cref{e:whitmod_quasilinear}, we arrive at
\begin{align}
	{\bf D}(u_{0}) = \begin{psmallmatrix}
		-kc_k - c & -kc_M & -kc_P \\
		-\LA 1,f'(u_{0})u_{0k} + \mathcal{K}*u_{0k}\RA & -\LA 1,f'(u_{0})u_{0M} + \mathcal{K}*u_{0M}\RA & -\LA 1,f'(u_{0})u_{0P} + \mathcal{K}*u_{0P}\RA \\
		 d_{31} & d_{32} & d_{33}
	\end{psmallmatrix},\label{eq:WhithamMatrix}
\end{align}
where
\begin{align*}
	d_{31} &= -\LA u_{0}, f'(u_{0})u_{0k} + \mathcal{K}*u_{0k} - k\mathcal{K}_{1}*u_{0\theta k} - \mathcal{K}_{1}*u_{0\theta} + \frac{1}{2}k\mathcal{K}_{2}*u_{0\theta\theta}\RA\\
	d_{32} &= -\LA u_{0}, f'(u_{0})u_{0M} + \mathcal{K}*u_{0M} - k\mathcal{K}_{1}*u_{0\theta M}\RA\\
	d_{33} &= -\LA u_{0}, f'(u_{0})u_{0P} + \mathcal{K}*u_{0P} - k\mathcal{K}_{1}*u_{0\theta P}\RA.
\end{align*}
\begin{theorem}\label{T:main}
Suppose $\phi_0$ satisfies \Cref{assumption:a1,assumption:a2,assumption:a3,assumption:a4}.  
A necessary condition for $\phi_0$ to be spectrally stable is that the Whitham modulation system is weakly hyperbolic at $(k_0, M(\phi_0), P(\phi_0))$, in the sense that all the characteristic speeds must be real.  Additionally, a sufficient condition for $\phi_0$ to be spectrally stable (in a neighborhood of the origin) is that the Whitham modulation system is strictly hyperbolic.
\end{theorem}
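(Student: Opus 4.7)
The plan is to analyze the spectrum of the linearized operator $\mathcal{L}[\phi_0]$ near the origin via Bloch/Floquet decomposition, and show that the three spectral branches bifurcating from $\lambda=0$ have leading-order slopes (in the Floquet parameter) determined precisely by the eigenvalues of the matrix $\mathbf{D}(u_0)$ in \Cref{eq:WhithamMatrix}. First I would write perturbations as $u(x,t)=\phi_0(\theta)+e^{\lambda t}v(\theta)$, decompose $v(\theta)=e^{i\mu\theta}w(\theta)$ with $w$ being $2\pi$-periodic and $\mu\in[-1/2,1/2)$ the Floquet exponent, obtaining a Bloch operator $\mathcal{L}_\mu[\phi_0]$ on $L^2_{\rm per}[0,2\pi)$ that depends analytically on $\mu$. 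Spectral stability in a neighborhood of the origin amounts to the three eigenvalues of $\mathcal{L}_\mu[\phi_0]$ that bifurcate from $\lambda(0)=0$ remaining purely imaginary for $|\mu|$ small.

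Next I would characterize the generalized kernel of $\mathcal{L}_0[\phi_0]=\mathcal{L}[\phi_0]$. By \Cref{assumption:a4}, $\ker_{\rm per}\mathcal{L}[\phi_0]=\Span\{\phi_0'\}$, with $\phi_0'=k\partial_\theta\phi_0$ generated by translation. The two additional generalized kernel elements should come from differentiating the profile equation \Cref{e:profile2} in the Whitham parameters $M$ and $P$: the vectors $\phi_M$ and $\phi_P$ solve $\mathcal{L}[\phi]\phi_M=b_M$ and $\mathcal{L}[\phi]\phi_P=b_P$, so after projecting out $\phi'$ one obtains Jordan chains of length up to three. I would verify, using \Cref{assumption:a4} and the solvability condition $\langle 1,\phi'\rangle=0$, that $\gker_{\rm per}(\mathcal{L}[\phi_0])$ is exactly three-dimensional, spanned by $\{\phi',\phi_M,\phi_P\}$, with Jordan structure determined by a non-degeneracy analogous to \cite[Assumption 5.N3]{BH14}; this will drop out of \Cref{assumption:a4} as promised in the text preceding \Cref{T:main}.

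Then comes the perturbation step. Standard Kato theory (as in \cite{JZ2010,BNR14,CM2021}) lets me construct an analytic basis $\{q_1(\mu),q_2(\mu),q_3(\mu)\}$ of the total eigenspace of $\mathcal{L}_\mu[\phi_0]$ associated with the three eigenvalues bifurcating from zero, together with a dual basis of left eigenfunctions; the restricted $3\times 3$ matrix $B(\mu)$ then has spectrum equal to those three eigenvalues. Expanding $B(\mu)=B_0+i\mu B_1+\mathrm{O}(\mu^2)$ and computing $B_1$ against the basis $\{\phi',\phi_M,\phi_P\}$ (paired with the natural duals $\langle 1,\cdot\rangle$, $\langle \phi_0,\cdot\rangle$, and a translational dual), I expect that $B_1$ is conjugate, via the change of basis provided by the conserved quantity differentials, to the Whitham matrix $\mathbf{D}(u_0)$. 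The key calculation here will be showing that the nonlocal convolution contributions reproduce exactly the $\mathcal{K}^{(1)}$, $\mathcal{K}_1$, and $\mathcal{K}_2$ terms in \Cref{eq:WhithamMatrix}; this is where the Taylor remainder bookkeeping of \Cref{appendix:aA} feeds directly into the spectral side. I expect this matching step to be the main technical obstacle, since one must track nonlocal terms arising from Bloch-conjugating $\mathcal{K}*$ (producing $\mathcal{K}*$ with shifted symbol $\Omega(k(q+\mu))/k(q+\mu)$) and then expanding in $\mu$ carefully enough to recover the exact entries $d_{31},d_{32},d_{33}$.

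Finally I would draw the two conclusions. If $\mathbf{D}(u_0)$ has a genuinely complex eigenvalue, then so does $B_1$, so for small $\mu\neq 0$ at least one eigenvalue of $B(\mu)$ leaves the imaginary axis, producing unstable spectrum of $\mathcal{L}_\mu$; this gives weak hyperbolicity of the Whitham system as a necessary condition for spectral stability. For the sufficient direction, strict hyperbolicity means $B_1$ has three distinct real eigenvalues, so $B(\mu)$ has three distinct eigenvalues $i\mu\alpha_j+\mathrm{O}(\mu^2)$ for $j=1,2,3$. Here the Hamiltonian structure $\partial_t=\mathcal{J}\delta H/\delta u$ with $\mathcal{J}=\partial_x$ enters decisively: the spectrum of $\mathcal{L}_\mu$ is symmetric with respect to the imaginary axis (the operator is $\mathcal{J}_\mu\mathcal{H}_\mu$ with $\mathcal{J}_\mu$ skew and $\mathcal{H}_\mu$ self-adjoint), so simple eigenvalues close to the imaginary axis must in fact lie on it. Thus the three branches remain purely imaginary in a neighborhood of $\mu=0$, yielding modulational spectral stability and completing the proof of \Cref{T:main}.
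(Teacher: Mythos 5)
Your overall strategy---Bloch decomposition, characterization of the generalized kernel at $\tau=0$ via the parameter derivatives $\phi'$, $\phi_M$, $\phi_P$, Kato-analytic bases for the total eigenspace, identification of the reduced $3\times3$ matrix with $\mathbf{D}(u_0)$, and the Hamiltonian symmetry argument for the sufficiency direction---is the same as the paper's. However, there is a genuine gap in your perturbation step. By \Cref{P:gkerper} the zero eigenvalue of $\mathcal{A}_0[\phi]$ has algebraic multiplicity three but geometric multiplicity only \emph{two}: the Jordan chain $\mathcal{A}_0[\phi]\phi_{M}=-kc_M\phi'$, $\mathcal{A}_0[\phi]\phi_P=-kc_P\phi'$ forces the zeroth-order reduced matrix $B_0=D_0$ to be a nonzero nilpotent matrix (its first row contains $-kc_M$ and $-kc_P$, which cannot both vanish). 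Consequently the three bifurcating eigenvalues are \emph{not} given to leading order by $i\mu$ times the eigenvalues of the first-order coefficient $B_1$, and your claim that ``$B_1$ is conjugate to $\mathbf{D}(u_0)$'' cannot be the right reduction: with a nontrivial Jordan block one must perform the inhomogeneous rescaling $\widehat{D}_\tau=\frac{1}{ik\tau}S(\tau)D_\tau S(\tau)^{-1}$ with $S(\tau)=\diag(ik\tau,1,1)$, so that the limiting matrix $\widehat{D}_0$ mixes the first row of $D_0$, most entries of $D_1$, and the $(2,1)$ and $(3,1)$ entries of the \emph{second}-order coefficient $D_2$. Your expansion to $\mathrm{O}(\mu^2)$ therefore does not reach the terms $\langle\Psi_j^0,\mathcal{A}_1\phi_k+\mathcal{A}_2\Phi_1^0\rangle$ that populate the first column of $\widehat{D}_0$ and carry the $\mathcal{K}_2$ contribution to $d_{31}$ in \Cref{eq:WhithamMatrix}.

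A second, related omission: to make the first column come out in terms of $\phi_k$ at all, the paper must first adjust the Kato bases (the tilde bases) so that $\partial_\tau\Phi_1^\tau|_{\tau=0}=ik\phi_k$ modulo the kernel, using the identity $\mathcal{A}_1\Phi_1^0=-\mathcal{A}_0\phi_k-kc_k\Phi_1^0$ derived from differentiating the profile equation in $k$. Without this normalization the entries of your $B_1$ contain undetermined projections onto the generalized kernel and the matching with $\mathbf{D}(u_0)$ does not close. Note also that the final identification is $\mathbf{D}(u_0)=\widehat{D}_0-cI$ rather than a conjugation; the shift by $cI$ is harmless for the stability conclusions since it preserves imaginary parts, but it is part of why the rows match. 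Your necessity and sufficiency conclusions (including the use of the quadrantal symmetry of the spectrum of $\partial_\theta\mathcal{L}$ to upgrade strict hyperbolicity to stability) are correct once the reduced matrix is computed properly.
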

Before we prove this theorem, we recast the problem from the perspective of a rigorous spectral analysis of the linearized operator.
\section{Rigorous Modulation Stability Theory}\label{S:RigorousTheory}
In this section, we adapt the rigorous modulational stability theory of \cite{BNR14,JP2020} to \Cref{e:Whitham}. In particular, we find the dual right and left bases of the generalized kernel of the linearized operator. An extension of Floquet-Bloch theory to non-local eigenvalue problems with periodic coefficients \cite{J13} allows us to characterize the $ L^{2}(\R) $ spectrum of the linearized operator as the union of discrete spectra of a family of \textit{Bloch operators} $ \mathcal{A}_{\tau}[\phi] $ parametrized by the \textit{Bloch parameter} $ \tau $. Using spectral perturbation theory, we then construct a matrix the determinant of which captures the asymptotic behaviour of the spectrum of the linearized operator near the origin.
\subsection{Linearized eigenvalue problem}
The standard method of setting up this spectral problem, as seen in \cite{BNR14,BrHJ16,JP2020}, involves writing a nearby solution as $u(\theta,t) = \phi(\theta) + \epsilon v(\theta,t)$. Then with $ \epsilon \ll 1 $, at O$ (\epsilon) $ the perturbation $v(t)\in L^2(\R)$ satisfies
\[
v_t = \mathcal{A}[\phi]v,
\]
where
\begin{align}
	\mathcal{A}[\phi] = \d_\theta\mathcal{L}[\phi], \qquad \mathcal{L}[\phi] = k\left(c - f'(\phi) - \mathcal{K}*\right) \label{eq:linearoperators}
\end{align}
is considered as a densely defined operator acting on $L^2(\R)$. Note that $\mathcal{L}[\phi]$ is a symmetric operator by \Cref{L:Kprop}. Since $ \mathcal{A}[\phi] $ has $2\pi$-periodic coefficients and noting the extension of Floquet-Bloch theory to operators involving Fourier multipliers \cite[Proposition 3.1]{J13}, we conclude in the standard way that bounded solutions $ v $ to
\begin{equation}\label{eq:spectralproblem}
\mathcal{A}[\phi]v=\lambda v
\end{equation}
 are not in $ L^{2}(\R) $ (cf. \cite{KP_book,RS4}), but rather have the form
\begin{equation}\label{e:quasiper}
    v(\theta)=e^{i\tau \theta}w(\theta)
\end{equation}
for some $w\in L^2_{\rm per}[0,2\pi)$ and $\tau\in[-1/2,1/2)$.  This implies that $\lambda\in \CM$ belongs to the $L^2(\RM)$-spectrum of $A[\phi]$ if and only if there exists a $\tau\in[-1/2,1/2)$ and $w\in L^2_{\rm per}(0,2\pi)$ such that
\begin{equation}\label{bloch_spec}
\lambda w = e^{-i\tau \theta}\mathcal{A}[\phi]e^{i\tau \theta}w=:\mathcal{A}_\tau[\phi]w.
\end{equation}
A standard result of Floquet-Bloch theory is the decomposition of the essential spectrum of $ \mathcal{A}[\phi] $ into the union of isolated eigenvalues with finite algebraic multiplicities \cite{RS4}:
\begin{equation}\label{spec_decomp}
\sigma_{L^2(\RM)}\left(\mathcal{A}[\phi]\right)=\bigcup_{\tau\in[-1/2,1/2)}\sigma_{L^2_{\rm per}(0,2\pi)}\left(\mathcal{A}_\tau[\phi]\right),
\end{equation}

\subsection{Analysis of the Unmodulated Operator}
We are interested in the modulational stability of $ \phi $, namely the stability subject to long wavelength perturbations, which corresponds to $ \abs{\tau}\ll 1$, see \Cref{e:quasiper}. In order to describe this behaviour, we follow a procedure to determine the leading order behaviour of the spectral curves of $ \mathcal{A}_{\tau}[\phi] $ emerging from $ (\lambda,\tau) = (0,0) $. Firstly, we characterize the generalized kernel of the unmodulated operator $ \mathcal{A}_{0}[\phi] $, subject to \Cref{assumption:a3,assumption:a4}. Then we analyze how the generalized kernel breaks up into distinct eigenspaces as $ \tau $ varies; modulational instability arises when the eigenvalues corresponding to these eigenspaces have nonzero real parts. Our analysis and notation is almost identical to the results of \cite{BNR14,JP2020}.

We start by differentiating \Cref{e:profile2} with respect to $\theta$, $M$, and $P$, which yields
\begin{equation}\label{e:kernelelmts}
\mathcal{L}[\phi]\phi' = 0, \quad\mathcal{L}[\phi]\phi_M = -b_M - kc_M \phi, \quad \mathcal{L}[\phi]\phi_P = -b_P - kc_P \phi.
\end{equation}
We find the elements of the generalized kernel by differentiating again with respect to $ \theta $:
\begin{equation}\label{e:kernelelmts2}
\mathcal{A}[\phi]\phi'=0, \quad\mathcal{A}[\phi]\phi_M=-kc_M\phi', \quad\mathcal{A}[\phi]\phi_P=-kc_P\phi'.
\end{equation}
Since $ \mathcal{L}[\phi] $ is symmetric by \Cref{L:Kprop}, we have that
\[
\mathcal{A}^\dagger[\phi] = -\mathcal{L}[\phi]\d_\theta,
\]
which implies that
\begin{equation}\label{e:kernelelmts3}
\mathcal{A}^\dagger[\phi] \phi= 0 = \mathcal{A}^\dagger[\phi]1.
\end{equation}
We can find an element in the generalized kernel of $ \mathcal{A}^{\dagger}[\phi] $ by noting that
\[
\mathcal{A}^\dagger[\phi]\int_0^\theta \phi_M(z)\,dz = b_M +kc_M\phi,\quad\mathcal{A}^\dagger[\phi]\int_0^\theta \phi_P(z)\,dz = b_P +kc_P\phi.
\]
However, differentiating \Cref{e:M,e:P} with respect to $M$ and $P$ yields
\begin{equation}\label{e:innprod1}
    \LA 1, \phi_M\RA = 1 = \LA \phi, \phi_P\RA \quad\text{and}\quad \LA 1, \phi_P\RA = 0 = \LA \phi, \phi_M\RA,
\end{equation}
so only $\int_0^\theta \phi_P(z)dz$ is $2\pi$-periodic. 

\begin{proposition}\label{P:gkerper}
Under \Cref{assumption:a1,assumption:a2,assumption:a3,assumption:a4}, we have that $\lambda = 0$ is an eigenvalue of $\mathcal{A}_0[\phi]$ with algebraic multiplicity three and geometric multiplicity two.  Using the notation
\[
\begin{array}{ccccc}
    \Phi_1^0 = \phi' & ~~ & \Phi_2^0 = \phi_M & ~~ & \Phi_3^0 = \phi_P \\
    \Psi_1^0 = -\int_0^\theta \phi_P(z)\,dz & ~~ & \Psi_2^0 = 1 & ~~ & \Psi_3^0 = \phi,
\end{array}
\]
then $\{\Phi_\ell^0\}_{\ell=1}^3$ and $\{\Psi_j^0\}_{j=1}^3$ are biorthogonal bases for the generalized kernels of $\mathcal{A}_0[\phi]$ and $\mathcal{A}_0^\dagger[\phi]$ acting on $ L^{2}_{\rm per}[0,2\pi) $ respectively, i.e., $\LA \Psi_j^0, \Phi_\ell^0\RA = \delta_{j\ell}$. The $\Phi_\ell^0$ and $\Psi_j^0$ satisfy the equations
\[
\mathcal{A}_0[\phi]\Phi_1^0 = 0, \quad \mathcal{A}_0[\phi]\Phi_2^0 = -kc_M\Phi_1^0, \quad \mathcal{A}_0[\phi]\Phi_3^0 = -kc_P\Phi_1^0
\]
and
\[
\mathcal{A}_0^\dagger[\phi]\Psi_2^0 = 0 = \mathcal{A}_0^\dagger[\phi]\Psi_3^0, \quad \mathcal{A}_0^\dagger[\phi]\Psi_1^0 = -b_p\Psi_2^0 - kc_P\Psi_3^0.
\]
\end{proposition}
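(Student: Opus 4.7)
My approach combines three ingredients: the Fredholm alternative for $\mathcal{L}[\phi]$ (whose one-dimensional periodic kernel is given by \Cref{assumption:a4}), systematic differentiation of the profile equation \Cref{e:profile2} in $M$ and $P$, and the duality between $\mathcal{A}_0[\phi]$ and $\mathcal{A}_0^\dagger[\phi] = -\mathcal{L}[\phi]\partial_\theta$. I would begin with the adjoint, since $\mathcal{A}_0^\dagger \Psi = 0$ reads $\mathcal{L}\Psi' = 0$: \Cref{assumption:a4} forces $\Psi' \in \Span\{\phi'\}$ and hence $\Psi \in \Span\{1,\phi\}$, yielding $\Psi_2^0$ and $\Psi_3^0$. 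For $\mathcal{A}_0 v = 0$, equivalently $\mathcal{L}v = \text{const}$, any constant is automatically orthogonal to $\kerper\mathcal{L} = \Span\{\phi'\}$, so the Fredholm alternative returns a two-dimensional periodic preimage; both operators therefore have geometric multiplicity two.

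Next I would produce the generalized eigenvectors. Differentiating \Cref{e:profile2} in $M$, $P$, and $\theta$ gives \Cref{e:kernelelmts,e:kernelelmts2}, which place $\phi_M,\phi_P$ in $\gkerper\mathcal{A}_0$ since $\mathcal{A}_0^2\phi_M = -kc_M\mathcal{A}_0\phi' = 0$, and similarly for $\phi_P$. On the adjoint side, the already-recorded identity $\mathcal{A}_0^\dagger \int_0^\theta \phi_P(z)dz = b_P + kc_P\phi$, combined with the periodicity of the antiderivative (which is genuinely valid because $\LA 1,\phi_P\RA = 0$ by \Cref{e:innprod1}), gives $\Psi_1^0 = -\int_0^\theta \phi_P(z)dz \in \gkerper\mathcal{A}_0^\dagger$ with the stated action. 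The analogous construction using $\phi_M$ is blocked by $\LA 1,\phi_M\RA = 1$, which is what makes the dual basis asymmetric in $M$ and $P$. Linear independence of $\{\phi',\phi_M,\phi_P\}$ falls out of pairing a vanishing combination against $1$ and $\phi$ and using \Cref{e:innprod1} together with the periodicity of $\phi$, so $\dim\gkerper\mathcal{A}_0\geq 3$; the mirror argument produces three independent elements of $\gkerper\mathcal{A}_0^\dagger$, and the nondegenerate $L^2$-pairing between the two generalized kernels, combined with a Fredholm argument that rules out an ascent-$3$ direction (such a direction would require $\mathcal{L}v = \phi_M + \text{const}$ to have a periodic solution, which conflicts with \Cref{assumption:a4}), pins the algebraic multiplicity at exactly three.

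For biorthogonality, the $2\times 2$ block of $\LA\Psi_j^0,\Phi_\ell^0\RA$ with $j,\ell\in\{2,3\}$ is immediate from \Cref{e:innprod1} and the periodicity of $\phi$. Setting $F(\theta) := \int_0^\theta \phi_P(z)\,dz$ with $F(0) = F(2\pi) = 0$, integration by parts gives $\LA\Psi_1^0,\phi'\RA = \LA\phi_P,\phi\RA = 1$ and $\LA\Psi_1^0,\phi_P\RA = -\tfrac{1}{2}F^2\big|_0^{2\pi} = 0$; the remaining entry $\LA\Psi_1^0,\phi_M\RA$ I would absorb into the freedom to modify $\Psi_1^0$ by the constant element of $\kerper\mathcal{A}_0^\dagger$ (the shift $\Psi_1^0\mapsto \Psi_1^0+\alpha$ changes $\LA\Psi_1^0,\phi_M\RA$ by $\alpha\LA 1,\phi_M\RA = \alpha$ while leaving $\mathcal{A}_0^\dagger\Psi_1^0$ and the other five already-correct entries untouched), and the self-consistency of $\LA\mathcal{A}_0^\dagger\Psi_1^0,\phi_M\RA = \LA\Psi_1^0,\mathcal{A}_0\phi_M\RA$ simultaneously delivers the symmetry identity $b_P = kc_M$ flagged in the remark preceding this proposition. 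The hard part will be extracting, from the bare spectral hypothesis $\kerper\mathcal{L} = \Span\{\phi'\}$, the full nondegeneracy needed to pin the Jordan structure---in particular the implicit condition $M_cP_b - M_bP_c\neq 0$ noted in the remark---which simultaneously fixes the algebraic multiplicity at three, rules out hidden ascent-$3$ directions, and licenses the normalization of $\Psi_1^0$ that closes biorthogonality.
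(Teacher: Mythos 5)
Your overall strategy is the paper's: differentiate the profile equation \Cref{e:profile2} in $M$, $P$, $\theta$ to generate the Jordan chains, use \Cref{assumption:a4} plus the Fredholm alternative to cap the dimensions, and verify biorthogonality from \Cref{e:innprod1}. Your treatment of the adjoint kernel ($\mathcal{L}\Psi'=0\Rightarrow\Psi\in\Span\{1,\phi\}$) and your Fredholm count of $\kerper(\mathcal{A}_0)$ as the preimage of the constants are clean, and arguably tidier than the paper's route, which instead exhibits the second kernel element concretely as $c_M\phi_P-c_P\phi_M$ after showing $c_Mb_P-c_Pb_M\neq0$. That last nondegeneracy --- the thing you flag at the end as "the hard part" --- is not an extra hypothesis you need to import: the paper derives it from \Cref{assumption:a4} by noting that $\mathcal{L}[\phi](c_M\phi_P-c_P\phi_M)=-(c_Mb_P-c_Pb_M)$ is constant, so if it vanished this even function would lie in $\Span\{\phi'\}$ (odd), forcing $c_M=c_P=0$ and then a three-dimensional kernel of $\mathcal{A}_0$, contradicting your own two-dimensional count. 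Your self-consistency derivation of $b_P=kc_M$ is a nice bonus consistent with the paper's remark.

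Two concrete repairs are needed. First, the entry $\LA\Psi_1^0,\Phi_2^0\RA$: the proposition fixes $\Psi_1^0=-\int_0^\theta\phi_P(z)\,dz$ and asserts biorthogonality for \emph{that} element, so you are not free to absorb the defect into a constant shift --- doing so proves a variant of the statement, not the statement. The paper's argument is parity: \Cref{assumption:a4} makes $\phi$ even, hence $\phi_M$ and $\phi_P$ are even and $\int_0^\theta\phi_P(z)\,dz$ is odd, so $\LA\Psi_1^0,\Phi_2^0\RA=\LA\Psi_1^0,\Phi_3^0\RA=0$ directly (and in particular your shift $\alpha$ is automatically zero). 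Second, your obstruction to an ascent-$3$ direction is misidentified. Such a direction would require $\mathcal{A}_0v=\partial_\theta\mathcal{L}v$ to hit $\phi_M$ or $\phi_P$ (modulo $\ker\mathcal{A}_0$), i.e.\ $\mathcal{L}v=\int_0^\theta\phi_M(z)\,dz+\mathrm{const}$, not $\mathcal{L}v=\phi_M+\mathrm{const}$; the latter equation is in fact solvable, since $\LA\phi',\phi_M\RA=0$ by parity, so it yields no contradiction with \Cref{assumption:a4}. The true obstruction is the Fredholm condition $\Range(\mathcal{A}_0)\perp\kerper(\mathcal{A}_0^\dagger)=\Span\{1,\phi\}$ together with $\LA 1,\phi_M\RA=\LA\phi,\phi_P\RA=1$ from \Cref{e:innprod1} --- equivalently, $\int_0^\theta\phi_M(z)\,dz$ fails to be periodic --- which is exactly the biorthogonality data the paper invokes when it concludes the algebraic multiplicity is three.
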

\begin{proof}
First observe from \Cref{e:kernelelmts} that 
\[
\mathcal{L}[\phi]\{c,\phi\}_{M,P} = -\{c, b\}_{M,P} \in \Span\{1\},
\]
where we have introduced the convenient notation
\[
\{f,g\}_{\alpha,\beta} := 
\left|\begin{array}{cc}
    f_\alpha & f_\beta \\
    g_\alpha & g_\beta
\end{array}\right| 
= f_\alpha g_\beta - f_\beta g_\alpha.
\]
By \Cref{assumption:a4}, $ \dim\ker(\mathcal{L}[\phi]) = 1 $, so $\{c,b\}_{M,P}\neq 0 $ which implies that $\phi' =\Phi_1^0 $ and $ \{c,\phi\}_{M,P}\in\Span\{\Phi_2^0, \Phi_3^0\} $ are linearly independent. \Cref{assumption:a4} additionally implies that the kernel of $\mathcal{A}_0[\phi] = \d_\theta\mathcal{L}[\phi]$ is at most two-dimensional, so we may conclude that
\[
\ker(\mathcal{A}_0[\phi]) = \Span\{\Phi_1^0, \{c,\phi\}_{M,P}\}.
\]
Moreover, \Cref{e:kernelelmts2} implies that $c_M$ and $c_P$ cannot simultaneously vanish. By duality, we have that the kernel of $\mathcal{A}_0^\dagger[\phi]$ is at most two-dimensional, so that \Cref{e:kernelelmts3} implies
\[
\ker(\mathcal{A}_0^\dagger[\phi]) = \Span\{\Psi_2^0,\Psi_3^0\}.
\]
From \Cref{e:innprod1} and the fundamental theorem of calculus we have
\begin{align*}
	\LA \Psi_2^0, \Phi_2^0\RA &=1 = \LA \Psi_3^0, \Phi_3^0\RA\\
	\LA \Psi_2^0, \Phi_3^0\RA &= 0 = \LA \Psi_3^0, \Phi_2^0\RA\\
	\LA \Psi_2^0, \Phi_1^0\RA &= 0 = \LA \Psi_3^0, \Phi_1^0\RA.
\end{align*}
Integration by parts yields
\[
\LA \Psi_1^0, \Phi_1^0\RA = \LA \Phi_3^0,\Psi_3^0\RA =1,
\]
and finally
\[
\LA \Psi_1^0, \Phi_2^0\RA = 0 = \LA \Psi_1^0, \Phi_3^0\RA
\]
by parity.  Consequently, the Fredholm Alternative implies that $\lambda=0$ is an eigenvalue with algebraic multiplicity three and geometric multiplicity two.
\end{proof}

\begin{remark}
Since $-b_P\Psi_2^0 - kc_P\Psi_3^0$ lies in the range of $\mathcal{A}_0^\dagger[\phi]$, the Fredholm Alternative implies that $-b_P\Psi_2^0 - kc_P\Psi_3^0$ is orthogonal to the kernel of $\mathcal{A}[\phi]$.  Clearly, it is orthogonal to $\Phi_1^0$.  We also have that $\{c,\phi\}_{M,P} = c_M\Phi_3^0 - c_P\Phi_2^0 \in \ker(\mathcal{A}_0[\phi])$, hence
\[
0 = \LA -b_P\Psi_2^0 - kc_P\Psi_3^0, c_M\Phi_3^0 - c_P\Phi_2^0\RA = b_Pc_P - kc_Mc_P \Longrightarrow b_P = kc_M ~~\text{or}~~ c_P=0.
\]
We view this as a consequence of \Cref{assumption:a4} on the structure of the generalized kernel of $ \mathcal{A}_{0}[\phi] $.
\end{remark}

We now make some observations regarding $\phi_k$.  Differentiating \Cref{e:profile2} with respect to $k$ yields
\begin{align*}
	\mathcal{L}[\phi]\phi_k = -b_k - (kc)_k\phi + f(\phi) + \mathcal{K}*\phi + k\partial_{k}\int_{\R}K(\xi)\phi(\theta - k\xi)d\xi.
\end{align*}
We justify the passage of $ \partial_{k} $ under the integral since $ K(\xi)\partial_{k}\phi(\theta - k\xi) = -\xi K(\xi)\phi'(\theta-k\xi) \in L^{1}(\R)$, $ \xi K(\xi) \in L^{1}(\R) $ and $ \phi'(\theta-k\xi) < J $ for some constant $ J \in \R $. Hence:
\begin{align*}
	\mathcal{L}[\phi]\phi_k = -b_k - (kc)_k\phi + f(\phi) + \mathcal{K}*\phi - \mathcal{K}_{1}*\phi'
\end{align*}
so that
\begin{equation}\label{e:phik_identity}
\mathcal{A}_0\phi_k = -\left(c - f'(\phi)-\mathcal{K}*\right)\phi' - \mathcal{K}_{1}*\phi'' - kc_k\phi'.
\end{equation}
Furthermore, taking the derivatives of \Cref{e:M} and \Cref{e:P} with respect to $k$ yields
\begin{equation}\label{e:phik_prods}
\LA \Psi_2^0, \phi_k\RA = 0 = \LA \Psi_3^0, \phi_k\RA.
\end{equation}
\subsection{Modulational Stability Calculation}
Now we examine the bifurcation of the triple eigenvalue from $ (\lambda,\tau) = (0,0) $ by taking a Taylor expansion of the Bloch operators $\mathcal{A}_\tau[\phi]$ for $|\tau|\ll 1$.  Recall that
\begin{align*}
	\mathcal{A}_\tau[\phi]v &= e^{-i\tau\theta}\mathcal{A}_{\tau}[\phi]e^{i\tau\theta}v\\
	&= e^{-i\tau\theta}\d_\theta \mathcal{L}[\phi]e^{i\tau\theta}v\\
	&= (\partial_{\theta} + i\tau)e^{-i\tau\theta}\mathcal{L}[\phi]e^{i\tau\theta}v\\
	&= k(\d_\theta + i\tau)(c-f'(\phi))v - k(\d_\theta + i\tau) e^{-i\tau\theta}\mathcal{K}*e^{i\tau\theta}v.
\end{align*}
For the convolution term, note that
\begin{align*}
	e^{-i\tau\theta}\mathcal{K}*e^{i\tau\theta}v &= e^{-i\tau\theta}\int_{\R}K(\xi)v(\theta-k\xi)e^{i\tau(\theta-k\xi)}d\xi\\
	&= \int_{\R}K(\xi)v(\theta-k\xi)e^{-ik\tau\xi}d\xi\\
	&= \int_{\R}K(\xi)v(\theta-k\xi)\left(1 - ik\tau\xi - \frac{1}{2}k^{2}\tau^{2}\xi^{2}\right) + Rd\xi,
\end{align*}
where $ R $ is the remainder due to Taylor's theorem, given by
\begin{align*}
	R &= \int_{\R}\int_{0}^{1}\frac{1}{2}(-ik\tau\xi)^{3}(1-t)^{2}e^{-ikt\tau\xi}K(\xi)v(\theta-k\xi)dtd\xi.
\end{align*}
Since $ \xi^{3}K(\xi)\in L^{1}(\R) $ by \Cref{assumption:a1}, we then have:
\begin{align*}
	|R| &\leq \frac{1}{2}\bigg|\int_{\R}\int_{0}^{1}(-ik\tau\xi)^{3}(1-t)^{2}e^{-ikt\tau\xi}K(\xi)v(\theta-k\xi)dtd\xi\bigg|\\
	&\leq \frac{1}{2}\int_{\R}\int_{0}^{1}\bigg|(k\tau\xi)^{3}(1-t)^{2}K(\xi)v(\theta-k\xi)\bigg|dtd\xi\\
	&\leq \frac{1}{2}(k\tau)^{3}\int_{\R}\int_{0}^{1}\bigg|\xi^{3}K(\xi)v(\theta-k\xi)\bigg|dtd\xi.
\end{align*}
This implies that $ R \in \rm O(\tau^{3})$. Recalling the definitions of $\mathcal{K}_{1}$ and $\mathcal{K}_{2}$ in \Cref{eq:K_1,eq:K_2} respectively, we have that
\begin{align*}
	k(\d_\theta + i\tau) e^{-i\tau\theta}\mathcal{K}*e^{i\tau\theta}v &= k\partial_{\theta}\mathcal{K}*v + (ik\tau)\left(\mathcal{K}* - k\partial_{\theta}\mathcal{K}_{1}*\right)v + \frac{1}{2}(ik\tau)^{2}k\partial_{\theta}\mathcal{K}_{2}*v +  \mathrm{O}(\tau^{3}).
\end{align*}
We define the following operators
\begin{align}
	\mathcal{A}_{0} := \mathcal{A}_{0}[\phi],\quad \mathcal{A}_{1} := c - f'(\phi) - \mathcal{K}* + k\partial_{\theta}\mathcal{K}_{1}*,\quad \mathcal{A}_{2} := -\frac{1}{2}k\partial_{\theta}\mathcal{K}_{2},\label{eq:Adefs}
\end{align}
which allows us to write the expansion:
\begin{align}
	\mathcal{A}_{\tau}[\phi] &= \mathcal{A}_{0} + (ik\tau)\mathcal{A}_{1} + (ik\tau)^{2}\mathcal{A}_{2} + \mathrm{O}(\tau^{3}). \label{eq:Aexpansion}
\end{align}
For $ \abs{\tau} \ll 1 $, $ \mathcal{A}_{\tau}[\phi] $ is a relatively compact perturbation of $ \mathcal{A}_{0}[\phi] $ and analytic in $ \tau $. Hence the eigenvalue $ \lambda = 0 $ of multiplicity $ 3 $ bifurcates into three eigenvalues $ \{\lambda_{j}(\tau)\}_{j = 1}^{3} $ for $ 0 < \abs{\tau} \ll 1 $. The theory of Kato in \cite{K76} allows us to extend analytically the dual bases for the generalized kernels of $ \mathcal{A}_{0}[\phi] $ and $ \mathcal{A}_{0}^{\dagger}[\phi] $ into dual left and right bases, $ \{\Psi_{j}^{\tau}\}_{j = 1}^{3} $ and $  \{\Phi_{j}^{\tau}\}_{j = 1}^{3} $ respectively, for the eigenspaces of $ \mathcal{A}_{\tau}[\phi] $ associated with the eigenvalues $ \{\lambda_{j}(\tau)\}_{j = 1}^{3} $. In particular, we preserve the biorthogonality $ \langle \Psi_{j}^{\tau},\Phi_{\ell}^{\tau}\rangle = \delta_{j\ell} $ for all $ \abs{\tau}\ll 1 $. As in \cite[Theorem 1]{BNR14} and \cite{JP2020}, we examine the action of $ \mathcal{A}_{\tau}[\phi] $ on the total eigenspace, which is determined by the matrix
\[
D_{\tau}:=\left(\LA\Psi_j^\tau,\mathcal{A}_\tau[\phi]\Phi_\ell^\tau\RA\right)_{j,\ell=1}^3,
\]
where we are using the $ L^{2}[0,2\pi) $ inner product such that, for $ d \in \C $, we have
\begin{align*}
	\langle f, d g\rangle = d\langle f,g\rangle = \langle \overline{d} f,g\rangle.
\end{align*}
The matrix $ D_{\tau} - \lambda I $, where $ I $ is the $ 3\times3 $ identity matrix, is singular when $ \lambda = \lambda_{j}(\tau) $. We therefore proceed by computing the leading order entries of the matrix $ D_{\tau} $. With the Taylor expansion
\begin{align*}
	\Phi_{\ell}^{\tau} = \Phi_{\ell}^{0} + (ik\tau)\left(\frac{1}{ik}\partial_{\tau}\Phi_{\ell}^{\tau}\bigg|_{\tau = 0}\right) + (ik\tau)^{2}\left(\frac{1}{2(ik)^{2}}\partial_{\tau}^{2}\Phi_{\ell}^{\tau}\bigg|_{\tau = 0}\right) + \mathrm{O}(\tau^{3}),
\end{align*}
and similarly for $ \Psi_{j}^{\tau} $, we expand the matrix $ D_{\tau} $ about $ \tau = 0 $, so that
\begin{align*}
	D_{\tau} = D_{0} + ik\tau D_{1} + (ik\tau)^{2}D_{2} + \mathrm{O}(\tau^{3}).
\end{align*}
From \Cref{P:gkerper} we have that
\begin{align*}
	D_{0} = \begin{pmatrix}
		0 & -kc_{M} & -kc_{P}\\
		0 & 0 & 0\\
		0 & 0 & 0
	\end{pmatrix}.
\end{align*}
The matrix $ D_{1} $ is given by
\begin{align}
	D_{1} = \left(\left\langle\Psi_{j}^{0}, \frac{1}{ik}\mathcal{A}_{0}[\phi]\partial_{\tau}\Phi_{\ell}^{\tau}\bigg|_{\tau = 0} + \mathcal{A}_{1}[\phi]\Phi_{\ell}^{0}\right\rangle + \left\langle\frac{1}{ik}\partial_{\tau}\Psi_{j}^{\tau}\bigg|_{\tau = 0},\mathcal{A}_{0}[\phi]\Phi_{j}^{0} \right\rangle\right)_{j,\ell = 1}^{3}.\label{eq:D1}
\end{align}
We first compute $ \partial_{\tau}\Phi_{1}^{\tau}\big|_{\tau = 0} $ by noting that:
\begin{align}
	\partial_{\tau}(\mathcal{A}_{\tau}[\phi]\Phi_{1}^{\tau})\big|_{\tau = 0} &= \partial_{\tau}(\lambda_{1}(\tau)\Phi_{1}^{\tau})\big|_{\tau = 0}\nn\\
	\implies \mathcal{A}_{0}\partial_{\tau}\Phi_{1}^{\tau}\big|_{\tau = 0} + ik\mathcal{A}_{1}\Phi_{1}^{0} &= \lambda_{1}'(0)\Phi_{1}^{0} + \lambda_{1}(0)\partial_{\tau}\Phi_{1}^{\tau}\big|_{\tau = 0}.\label{eq:phitauderiv}
\end{align}
Since $ \lambda_{1}(0) = 0 $ by \Cref{P:gkerper}, and also noting that we can rewrite \Cref{e:phik_identity} as
\begin{align}
	\mathcal{A}_{1}\Phi_{1}^{0} = -\mathcal{A}_{0}\phi_{k} - kc_{k}\Phi_{1}^{0},\label{eq:A1Phi1}
\end{align}
we simplify \Cref{eq:phitauderiv} to
\begin{align*}
	\mathcal{A}_{0}\left(\partial_{\tau}\Phi_{1}^{\tau}\big|_{\tau = 0} - ik\phi_{k}\right) = (\lambda_{1}'(0) + ik^{2}c_{k})\Phi_{1}^{0}.
\end{align*}
From our assumptions on the structure of the generalized kernel of $ \mathcal{A}_{0} $ in \Cref{P:gkerper}, we have that $ \partial_{\tau}\Phi_{1}^{\tau}\big|_{\tau = 0} - ik\phi_{k} \in \mathrm{span}\{\Phi_{1}^{0},\Phi_{2}^{0},\Phi_{3}^{0}\} $. If we let
\begin{align*}
	\frac{1}{ik}\partial_{\tau}\Phi_{1}^{\tau}\bigg|_{\tau = 0} = \phi_{k} + \sum_{j = 1}^{3}a_{j}\Phi_{j}^{0}
\end{align*}
for constants $ a_{j} $, we can define
\begin{align*}
	\widetilde{\Phi}_{1}^{\tau} &:= \Phi_{1}^{\tau} - ik\tau\sum_{\ell = 1}^{3}a_{\ell}\Phi_{\ell}^{\tau}\\
	\widetilde{\Psi}_{j}^{\tau} &:= \Psi_{j}^{\tau} - ik\tau \overline{a_{j}}\Psi_{1}^{\tau},
\end{align*}
and we note that
\begin{align}
	\left\langle\widetilde{\Psi}_{j}^{\tau},\widetilde{\Phi}_{\ell}^{\tau} \right\rangle = \delta_{j\ell} + \rm O(\tau^{2}).\label{eq:newbasesortho}
\end{align}
Also, we have that
\begin{align*}
	-kc_{M}a_{2} - kc_{P}a_{3} &= \frac{1}{ik}\lambda_{1}'(0) + kc_{k}.
\end{align*}
We observe that these new bases span the same spaces as the original bases, since
\begin{align*}
	\begin{pmatrix}
		\widetilde{\Phi}_{1}^{\tau}\\\widetilde{\Phi}_{2}^{\tau}\\\widetilde{\Phi}_{3}^{\tau}
	\end{pmatrix}
	&= \begin{pmatrix}
		1 - ik\tau a_{1} & -ik\tau a_{2} & -ik\tau a_{3}\\
		0 & 1 & 0\\
		0 & 0 & 1
	\end{pmatrix}
	\begin{pmatrix}
		\Phi_{1}^{\tau}\\\Phi_{2}^{\tau}\\\Phi_{3}^{\tau}
	\end{pmatrix}\\
	\begin{pmatrix}
		\widetilde{\Psi}_{1}^{\tau}\\\widetilde{\Psi}_{2}^{\tau}\\\widetilde{\Psi}_{3}^{\tau}
	\end{pmatrix}
	&= \begin{pmatrix}
		1 - ik\tau a_{1} & 0 & 0\\
		-ik\tau a_{2} & 1 & 0\\
		-ik\tau a_{3} & 0 & 1
	\end{pmatrix}
	\begin{pmatrix}
		\Psi_{1}^{\tau}\\\Psi_{2}^{\tau}\\\Psi_{3}^{\tau}
	\end{pmatrix},
\end{align*}
where both transformation matrices are invertible provided $ \tau \neq \frac{1}{ika_{1}} $. So we drop the tildes and proceed using the new bases instead. We now have, for example,
\begin{align*}
	\Phi_{1}^{\tau} &= \Phi_{1}^{0} + ik\tau\phi_{k} + \rm O(\tau^{2}),
\end{align*}
so that from \Cref{eq:D1} we compute
\begin{align*}
	(D_{1})_{j1} &= \left\langle\Psi_{j}^{0}, \mathcal{A}_{0}[\phi]\phi_{k} + \mathcal{A}_{1}[\phi]\Phi_{1}^{0}\right\rangle + \left\langle\frac{1}{ik}\partial_{\tau}\Psi_{j}^{\tau}\bigg|_{\tau = 0},\mathcal{A}_{0}[\phi]\Phi_{1}^{0} \right\rangle\\
	&= \left\langle\Psi_{j}^{0}, -kc_{k}\Phi_{1}^{0}\right\rangle\\
	&= -kc_{k}\delta_{j1},
\end{align*}
where we have used \Cref{P:gkerper,eq:A1Phi1}. We note also from \Cref{eq:newbasesortho} that:
\begin{align*}
	\partial_{\tau}\langle\Psi_{j}^{\tau},\Phi_{\ell}^{\tau} \rangle\big|_{\tau = 0} &= 0\\
	\implies \left\langle\partial_{\tau}\Psi_{j}^{\tau}\big|_{\tau = 0}, \Phi_{\ell}^{0} \right\rangle &= - \left \langle\Psi_{j}^{0},\partial_{\tau}\Phi_{\ell}^{\tau}\big|_{\tau = 0} \right \rangle.
\end{align*}
Introducing the convenient notation $ c_{\ell} = c_{M}, c_{P} $ for $ \ell = 2,3 $ respectively, we then have
\begin{align*}
	(D_{1})_{j\ell} &= \left\langle\Psi_{j}^{0}, \frac{1}{ik}\mathcal{A}_{0}[\phi]\partial_{\tau}\Phi_{\ell}^{\tau}\bigg|_{\tau = 0} + \mathcal{A}_{1}[\phi]\Phi_{\ell}^{0}\right\rangle + \left\langle\frac{1}{ik}\partial_{\tau}\Psi_{j}^{\tau}\bigg|_{\tau = 0},\mathcal{A}_{0}[\phi]\Phi_{\ell}^{0} \right\rangle\\
	&= \frac{1}{ik}\left\langle\mathcal{A}_{0}^{\dagger}[\phi]\Psi_{j}^{0}, \partial_{\tau}\Phi_{\ell}^{\tau}\big|_{\tau = 0}\right\rangle + \langle\Psi_{j}^{0},  \mathcal{A}_{1}[\phi]\Phi_{\ell}^{0}\rangle - \frac{1}{ik}\left\langle\partial_{\tau}\Psi_{j}^{\tau}\big|_{\tau = 0},-kc_{\ell}\Phi_{1}^{0} \right\rangle\\
	&= \langle\Psi_{j}^{0},  \mathcal{A}_{1}[\phi]\Phi_{\ell}^{0}\rangle + \frac{1}{ik}\left\langle\Psi_{j}^{0},-kc_{\ell}\partial_{\tau}\Phi_{1}^{\tau}\big|_{\tau = 0} \right\rangle\\
	&= \langle\Psi_{j}^{0},  \mathcal{A}_{1}[\phi]\Phi_{\ell}^{0}\rangle + \left\langle\Psi_{j}^{0},-kc_{\ell}\phi_{k}\right\rangle\\
	&= \langle\Psi_{j}^{0},  \mathcal{A}_{1}[\phi]\Phi_{\ell}^{0}\rangle,
\end{align*}
recalling \Cref{e:phik_prods}. So we have
\begin{align*}
	D_{1} &= \begin{pmatrix}
		-kc_{k} & * & *\\
		0 & \langle\Psi_{2}^{0},  \mathcal{A}_{1}[\phi]\Phi_{2}^{0}\rangle & \langle\Psi_{2}^{0},  \mathcal{A}_{1}[\phi]\Phi_{3}^{0}\rangle\\
		0 & \langle\Psi_{3}^{0},  \mathcal{A}_{1}[\phi]\Phi_{2}^{0}\rangle & \langle\Psi_{3}^{0},  \mathcal{A}_{1}[\phi]\Phi_{3}^{0}\rangle
	\end{pmatrix},
\end{align*}
where $ * $ represents an entry for which we have already calculated leading order behaviour in $ D_{0} $. Finally, we compute the leading order behaviour of the $ (2,1) $ and $ (3,1) $ entries of $ D $:
\begin{align*}
	(D_{2})_{j1} &= \left\langle\Psi_{j}^{0},-\frac{1}{2k^{2}}\mathcal{A}_{0}[\varphi]\partial_{\tau}^{2}\Phi_{1}^{\tau}\bigg\lvert_{\tau = 0} + \frac{1}{ik}\mathcal{A}_{1}[\varphi]\partial_{\tau}\Phi_{1}^{\tau}\big\lvert_{\tau = 0} + \mathcal{A}_{2}[\varphi]\Phi_{1}^{0}\right\rangle\\
	&+ \left\langle\frac{1}{ik}\partial_{\tau}\Psi_{j}^{\tau}\bigg|_{\tau = 0},\frac{1}{ik}\mathcal{A}_{0}[\varphi]\partial_{\tau}\Phi_{1}^{\tau}\bigg|_{\tau = 0} + \mathcal{A}_{1}[\varphi]\Phi_{1}^{0}\right\rangle + \left\langle -\frac{1}{2k^{2}}\partial_{\tau}^{2}\Psi_{l}^{\tau}\bigg\lvert_{\tau = 0},\mathcal{A}_{0}[\varphi]\Phi_{1}^{0}\right \rangle\\
	&= \langle \Psi_{j}^{0},\mathcal{A}_{1}\phi_{k} + \mathcal{A}_{2}\Phi_{1}^{0}\rangle - \frac{1}{ik}\left\langle\partial_{\tau}\Psi_{j}^{\tau}\big|_{\tau = 0},\mathcal{A}_{0}\phi_{k} + \mathcal{A}_{1}[\varphi]\Phi_{1}^{0}\right\rangle\\
	&= \langle \Psi_{j}^{0},\mathcal{A}_{1}\phi_{k} + \mathcal{A}_{2}\Phi_{1}^{0}\rangle - \frac{1}{ik}\left\langle\partial_{\tau}\Psi_{j}^{\tau}\big|_{\tau = 0},-kc_{k}\Phi_{1}^{0}\right\rangle\\
	&= \langle \Psi_{j}^{0},\mathcal{A}_{1}\phi_{k} + \mathcal{A}_{2}\Phi_{1}^{0}\rangle.
\end{align*}
As in \cite{BNR14}, we have shown that the limit as $ \tau \rightarrow 0 $ of
\begin{align*}
	\widehat{D}_{\tau} := \begin{pmatrix}
		\frac{1}{ik\tau}\langle \Psi_{1}^\tau,\mathcal{A}_\tau[\phi]\Phi_{1}^{\tau}\rangle & \langle \Psi_{1}^\tau,\mathcal{A}_\tau[\phi]\Phi_{2}^{\tau}\rangle & \langle \Psi_{1}^\tau,\mathcal{A}_\tau[\phi]\Phi_{3}^{\tau}\rangle\\
		-\frac{1}{k^{2}\tau^{2}}\langle \Psi_{2}^\tau,\mathcal{A}_\tau[\phi]\Phi_{1}^{\tau}\rangle & \frac{1}{ik\tau}\langle \Psi_{2}^\tau,\mathcal{A}_\tau[\phi]\Phi_{2}^{\tau}\rangle & \frac{1}{ik\tau}\langle \Psi_{2}^\tau,\mathcal{A}_\tau[\phi]\Phi_{3}^{\tau}\rangle\\
		-\frac{1}{k^{2}\tau^{2}}\langle \Psi_{3}^\tau,\mathcal{A}_\tau[\phi]\Phi_{1}^{\tau}\rangle & \frac{1}{ik\tau}\langle \Psi_{3}^\tau,\mathcal{A}_\tau[\phi]\Phi_{2}^{\tau}\rangle & \frac{1}{ik\tau}\langle \Psi_{3}^\tau,\mathcal{A}_\tau[\phi]\Phi_{3}^{\tau}\rangle
	\end{pmatrix}
\end{align*}
is
\begin{align}
	\widehat{D}_{0} = \begin{pmatrix}
		-kc_{k} & -kc_{M} & -kc_{P}\\
		\langle \Psi_{2}^{0},\mathcal{A}_{1}\phi_{k} + \mathcal{A}_{2}\Phi_{1}^{0}\rangle & \langle\Psi_{2}^{0},  \mathcal{A}_{1}[\phi]\Phi_{2}^{0}\rangle & \langle\Psi_{2}^{0},  \mathcal{A}_{1}[\phi]\Phi_{3}^{0}\rangle\\
		\langle \Psi_{3}^{0},\mathcal{A}_{1}\phi_{k} + \mathcal{A}_{2}\Phi_{1}^{0}\rangle & \langle\Psi_{3}^{0},  \mathcal{A}_{1}[\phi]\Phi_{2}^{0}\rangle & \langle\Psi_{3}^{0},  \mathcal{A}_{1}[\phi]\Phi_{3}^{0}\rangle
	\end{pmatrix}.
\end{align}
We note that
\begin{align*}
	\widehat{D}_{\tau} = \frac{1}{ik\tau}S(\tau)D_{\tau}S(\tau)^{-1},
\end{align*}
where
\begin{align*}
	S(\tau):= \begin{pmatrix}
		ik\tau & 0 & 0\\
		0 & 1 & 0\\
		0 & 0 & 1
	\end{pmatrix},
\end{align*}
and hence:
\begin{align}
	\det(D_{\tau} - \lambda I) = (ik\tau)^{3}\det(\widehat{D}_{\tau} - \frac{\lambda}{ik\tau}I). \label{eq:spectra}
\end{align}
Substituting one of the eigenvalue branches $ \lambda = \lambda_{j}(\tau) $ into \Cref{eq:spectra} and taking the limit as $ \tau \rightarrow 0 $ (noting that the determinant is multilinear), we see that
\begin{align*}
	\det(\widehat{D}_{0} - \frac{\lambda_{j}'(0)}{ik}I) = 0,
\end{align*}
so we conclude that a complex eigenvalue of $ \widehat{D}_{0} $ implies the existence of an eigenvalue $ \lambda_{j}(\tau) $ bifurcating from zero into the right (or left) half-plane. We claim that
\begin{align}
	{\bf D}(u_{0}) = \widehat{D}_{0} - cI,\label{eq:connection}
\end{align}
where $ {\bf D}(u_{0}) $ is the coefficient matrix from the Whitham theory defined in \Cref{eq:WhithamMatrix}. \Cref{eq:connection} implies that the eigenvalues of $ {\bf D}(u_{0}) $ and $ \widehat{D}_{0} $ have equal imaginary parts, which is sufficient to prove \Cref{T:main}.
\subsection{Proving \Cref{T:main}}
We now prove \Cref{T:main} by verifying \Cref{eq:connection}. The entries in the first rows of the respective matrices are equal. For the second row, we first note from \Cref{eq:Adefs} that for $ v \in L^{2}_{\rm per}[0,2\pi)$
\begin{align*}
	-\langle1,f'(\phi)v + \mathcal{K}*v \rangle &= \left\langle1,\left(c - f'(\phi) - \mathcal{K}* + k\partial_{\theta}\mathcal{K}_{1} - \frac{1}{2}k\partial_{\theta}\mathcal{K}_{2}*\right)v \right\rangle - c\langle1,v\rangle\\
	&= \langle\Psi_{2}^{0},(\mathcal{A}_{1} + \mathcal{A}_{2} - c)v\rangle,
\end{align*}
where Fubini's theorem and the commutativity of $ \partial_{\theta} $ and $ \mathcal{K}_{1},\mathcal{K}_{2} $ (a consequence of \Cref{L:Kprop}) ensure that the terms involving $ \partial_{\theta} $ vanish. In particular, $ \langle \Psi_{2}^{0},\mathcal{A}_{2}v \rangle = 0 $, so we drop this term altogether. For $ v = \Phi_{\ell}^{0} $ with $ \ell = 2,3 $, we have from \Cref{e:phik_prods,e:innprod1} that
\begin{align*}
	-\langle1,f'(\phi)\Phi_{\ell}^{0} + \mathcal{K}*\Phi_{\ell}^{0} \rangle &= \langle\Psi_{2}^{0},\mathcal{A}_{1}\Phi_{\ell}^{0}\rangle - c\delta_{2\ell},
\end{align*}
and for $ v = \phi_{k} $ we have
\begin{align*}
	-\langle1,f'(\phi)\phi_{k} + \mathcal{K}*\phi_{k} \rangle &= \langle\Psi_{2}^{0},\mathcal{A}_{1}\phi_{k}\rangle\\
	&= \langle\Psi_{2}^{0},\mathcal{A}_{1}\phi_{k} + \mathcal{A}_{2}\Phi_{1}^{0}\rangle.
\end{align*}
This shows that the second rows of $\widehat{D}_0 - cI$ and ${\bf D}(\phi)$ are identical. Finally, for the second and third entry of the third row we have:
\begin{align*}
	-\langle \phi,f'(\phi)\Phi_{\ell}^{0} + \mathcal{K}*\Phi_{\ell}^{0} - k\mathcal{K}_{1}*\partial_{\theta}\Phi_{\ell}^{0}\rangle &= \langle \Psi_{3}^{0},(\mathcal{A}_{1} - c)\Phi_{\ell}^{0}\rangle\\
	&= \langle\Psi_{3}^{0},\mathcal{A}_{1}\Phi_{\ell}^{0}\rangle - c\delta_{3\ell},
\end{align*}
and for the first entry:
\begin{align*}
	-\left\langle \phi,f'(\phi)\phi_{k} + \mathcal{K}*\phi_{k} - k\mathcal{K}_{1}*\phi_{k}' - \mathcal{K}_{1}*\phi' + \frac{1}{2}k\mathcal{K}_{2}*\phi'' \right\rangle &= \langle\Psi_{3}^{0},(\mathcal{A}_{1} - c)\phi_{k} + \mathcal{A}_{2}\Phi_{1}^{0} \rangle - \langle\phi,\mathcal{K}_{1}*\phi'\rangle.
\end{align*}
The $ -c\phi_{k} $ vanishes due to \Cref{e:phik_prods}, and for the last term we note from the symmetry of $ \mathcal{K}_{1} $ and integration by parts
\begin{align*}
	\langle\phi,\mathcal{K}_{1}*\phi'\rangle &= -\langle\partial_{\theta}\mathcal{K}_{1}\phi,\phi\rangle\\
	&= -\langle\phi,\mathcal{K}_{1}*\phi'\rangle,
\end{align*}
where taking the conjugate is unnecessary since $ \phi $ and $ \xi K(\xi) $ are real by \Cref{assumption:a1,assumption:a3}. This implies that $ \langle\phi,\mathcal{K}_{1}*\phi'\rangle = 0 $, and so we have
\begin{align*}
	-\left\langle \phi,f'(\phi)\phi_{k} + \mathcal{K}*\phi_{k} - k\mathcal{K}_{1}*\phi_{k}' - \mathcal{K}_{1}*\phi' + \frac{1}{2}k\mathcal{K}_{2}*\phi'' \right\rangle &= \langle\Psi_{3}^{0},\mathcal{A}_{1}\phi_{k} + \mathcal{A}_{2}\Phi_{1}^{0} \rangle,
\end{align*}
completing the verification of \Cref{eq:connection}. Thus, we have shown that weak hyperbolicity is a necessary condition for stability and ellipticity is a sufficient condition for instability. If $ {\bf D}(u_{0}) $ is strictly hyperbolic, i.e., it has three distinct real eigenvalues $ \{\mu_{j}\}_{j = 1}^{3} $, then $ \widehat{D}_{0} $ has three distinct eigenvalues with leading order behaviour:
\begin{align*}
	\lambda_{j}(\tau) = ik\tau(\mu_{j} + c) + \rm O(\tau^2).
\end{align*}
Since the linearized operator $ \mathcal{A}_{0}[\phi] = \partial_{\theta}\mathcal{L} $ has a Hamiltonian structure, its spectrum is symmetric about the real and imaginary axes. Following the reasoning of \cite{JZ2010}, if one of the $ \lambda_{j} $ bifurcates into the right half-plane, then we would have WLOG $ \lambda_{j} = -\overline{\lambda_{\ell}} $. This implies $ \mu_{j} = \mu_{\ell} $, which contradicts the fact that $ {\bf D}(u_{0}) $ is strictly hyperbolic. We conclude that strict hyperbolicity of the leading order Whitham modulation equations is a sufficient condition for the modulational stability of the underlying wave. This completes the proof of \Cref{T:main}. 
\begin{appendices}\crefalias{section}{appsec}
	\section{Derivation of the Whitham modulation equations}\label[appendix]{appendix:aA}
	The derivation of the Whitham modulation equations in \cite{BIHS2021} relies on the fact that  $ \Omega(q) $ is analytic, where
	\[
	\widehat{K}(q) = \frac{\Omega(q)}{q}.
	\]
	Also, the functions $ u_{0},u_{1},\dots $ from the expansion $ u(x,t) = u_{0}(\theta,X,T) + \epsilon u_{1}(\theta,X,T) + \mathrm{O}(\epsilon^{2}) $ are assumed to have Fourier transforms in a distributional sense. In order to deal with the prospect of exponential growth in the slow spatial and temporal scales, which can arise when the Whitham modulation equations are elliptic, the usual extension of the Fourier transform to the space of tempered distributions is invalid. Instead, test functions must be taken from a suitable Gelfand-Shilov space, and we believe this technical extension lies outside the scope of our paper (see \cite{CCK1996}, for example). Instead of using Fourier analysis, we re-derive the Whitham modulation equations by taking a Taylor expansion within the convolution, which we justify using a different set of assumptions to those in \cite{BIHS2021}. We start with the usual expansion of a solution to the PDE
	\[
	u(x,t) = u_{0}\left(\frac{1}{\epsilon}\psi(X,T),X,T\right) + \epsilon u_{1}\left(\frac{1}{\epsilon}\psi(X,T),X,T\right) + \mathrm{O}(\epsilon^{2}),
	\]
	with $ \psi $ chosen such that the $ u_{j} $ are periodic in the first variable. We define
	\[ 
	\theta:=\frac{1}{\epsilon}\psi(X,T),k:=\psi_{X}(X,T),\omega:=\psi_{T}(X,T),c:=\frac{\omega}{k},
	\]
	and following \cite{L66,Whitham1970,Whitham1999} we consider $ \theta $ to be an independent variable at certain points of the analysis in order to ensure a uniform expansion in $ \epsilon $. Once this ansatz is substituted into the PDE, we have:
	\begin{align*}
		-kcu_{0\theta} + f(u_{0} + \epsilon u_{1} + \dots)_{x} + \mathcal{K}*(u_{0} + \epsilon u_{1} + \dots)_{x} + \mathrm{O}(\epsilon) &= 0,
	\end{align*}
	and upon further simplification:
	\begin{equation}\label{eq:WExpand}
	\begin{aligned}
		&-kcu_{0\theta} + kf'(u_{0})u_{0\theta} + \epsilon(u_{0T} + f'(u_{0})u_{0X} + f''(u_{0})u_{1}u_{0\theta} -kcu_{1\theta} + kf'(u_{0})u_{1\theta})\\
		&+ \mathcal{K}*(u_{0} + \epsilon u_{1} + \dots)_{x}  + \mathrm{O}(\epsilon^{2}) = 0. 
	\end{aligned}
	\end{equation}
	Let $ X,T $ be fixed. The expansion of the convolution requires particular care; the first term is:
	\begin{align*}
		K*u_{0x} &= \int_{\R}K(\xi)\partial_{x}u_{0}\left(\frac{1}{\epsilon}\psi(X-\epsilon\xi,T),X-\epsilon\xi,T\right)d\xi\\
		&= \int_{\R}K(\xi)\bigg[k(X-\epsilon\xi,T)u_{0\theta}\left(\frac{1}{\epsilon}\psi(X-\epsilon\xi,T),X-\epsilon\xi,T\right) + \epsilon u_{0X}\left(\frac{1}{\epsilon}\psi(X-\epsilon\xi,T),X-\epsilon\xi,T\right)\bigg]d\xi,
	\end{align*}
	where we abuse notation in that $ u_{0X} $ denotes differentiation only in the second variable. We have also used the fact that $ k(X,T) = \psi_{X}(X,T) $. We need to take two Taylor expansions: firstly note that
	\begin{align*}
		\frac{1}{\epsilon}\psi(X-\epsilon\xi,T) &= \frac{1}{\epsilon}\psi(X,T) - \epsilon\xi\frac{1}{\epsilon}\psi_{X}(X,T) + \frac{1}{2}\epsilon^{2}\xi^{2}\frac{1}{\epsilon}\psi(X-\epsilon\xi,T) + \epsilon^{2}\xi^{2}\frac{1}{\epsilon}R_{\psi}(X-\epsilon\xi,T)\\
		&= \theta - k\xi + \frac{1}{2}\epsilon\xi^{2}k_{X} + \epsilon\xi^{2}R_{\psi}(X-\epsilon\xi,T),
	\end{align*}
	where $ R_{\psi} $ is the Peano form of the remainder due to Taylor's theorem. In particular, we have
	\[
	\lim_{\epsilon\xi \rightarrow 0}R_{\psi}(X-\epsilon\xi,T) = 0.\]
	Note that this also implies
	\begin{align*}
		k(X-\epsilon\xi,T) &= k - \epsilon\xi k_{X} + \frac{1}{2}\epsilon^{2}\xi^{2}k_{XX} + \epsilon^{2}\xi^{2}R_{\psi X}(X-\epsilon\xi,T),
	\end{align*}
	where the terms involving $ k $ without brackets are evaluated at $ (X,T) $. Next we invoke Taylor's theorem for the function $ u_{0} $:
	\begin{align*}
		&u_{0}\left(\frac{1}{\epsilon}\psi(X-\epsilon\xi,T),X-\epsilon\xi,T\right)\\
		&= u_{0}\left(\theta - k\xi + \frac{1}{2}\epsilon\xi^{2}k_{X} + \epsilon\xi^{2}R_{\psi}(X-\epsilon\xi,T),X-\epsilon\xi,T\right)\\
		&= u_{0}\left(\theta - k\xi,X,T\right) + \left(\frac{1}{2}\epsilon\xi^{2}k_{X} + \epsilon\xi^{2}R_{\psi}(X-\epsilon\xi,T)\right)u_{0\theta}\left(\theta - k\xi ,X,T\right) - \epsilon\xi u_{0X}\left(\theta - k\xi ,X,T\right)\\
		&+ \epsilon\left(\frac{1}{2}\xi^{2}k_{X} + \xi^{2}R_{\psi}(X-\epsilon\xi,T)\right)R_{1}\left(\theta - k\xi + \frac{1}{2}\epsilon\xi^{2}k_{X} + \epsilon\xi^{2}R_{\psi}(X-\epsilon\xi,T),X-\epsilon\xi,T\right)\\
		&+ \epsilon\xi R_{2}\left(\theta - k\xi + \frac{1}{2}\epsilon\xi^{2}k_{X} + \epsilon\xi^{2}R_{\psi}(X-\epsilon\xi,T),X-\epsilon\xi,T\right).
	\end{align*}
	Taylor's theorem requires that
	\begin{align*}
		\lim_{\left(\frac{1}{2}\epsilon\xi^{2}k_{X} + \epsilon\xi^{2}R_{\psi}(X-\epsilon\xi,T),\epsilon\xi\right)\rightarrow (0,0)}R_{i}\left(\theta - k\xi + \frac{1}{2}\epsilon\xi^{2}k_{X} + \epsilon\xi^{2}R_{\psi}(X-\epsilon\xi,T),X - \epsilon\xi,T\right) = 0
	\end{align*}
	for $ i = 1,2 $. We now define the notation
	\begin{align*}
		\mathcal{K}*_{\theta}\varphi := \int_{\R}K(\xi)\varphi(\theta - k\xi)d\xi,
	\end{align*}
	in order to recapture the convolution term in the travelling wave ODE \Cref{e:profile}. We use this different notation only in the context of Taylor expanding the convolution $ \mathcal{K}*u $, taken over $ x $, in order to highlight the fact that $ \mathcal{K}*_{\theta}u $ is taken over $ \theta $. We also use the same notation as in \Cref{eq:K_1,eq:K_2}
	\begin{align*}
		\mathcal{K}_{n}*_{\theta}\varphi := \int_{\R}\xi^{n}K(\xi)\varphi(\theta - k\xi)d\xi.
	\end{align*}
	The expansion of the convolution is ostensibly
	\begin{align*}
		&\mathcal{K}*u_{x} = \mathcal{K}*_{\theta}ku_{0\theta}\\
		&+ \epsilon\bigg[\mathcal{K}*_{\theta}(u_{0X} + ku_{1\theta}) - \mathcal{K}_{1}*_{\theta}\left(k_{X}u_{0\theta} + ku_{0X\theta} - kR_{2\theta}\right) + \mathcal{K}_{2}*_{\theta}\left(\left(\frac{1}{2}k_{X} + R_{\psi}\right)ku_{0\theta\theta} + \left(\frac{1}{2}k_{X} + R_{\psi}\right)kR_{1\theta}\right)\bigg]\\
		&+ \mathrm{O}(\epsilon^{2}).
	\end{align*}
	We have successfully decomposed the convolution operator $ \mathcal{K}* $ over the spatial variable into a series of convolutions carried out with respect to $ \theta $, which recovers the sense of convolution in the travelling wave profile \Cref{e:profile}. However, we must justify why the remainder terms vanish from the expansion to $ {\rm O}(\epsilon) $. Firstly, we are interested in the modulation expansion of a solution satisfying \Cref{e:profile}, so we restrict $ u_{0},u_{1},\dots $ and $ \psi(X,T) $ so that the convolution terms involving remainders satisfy
	\begin{align*}
		k\mathcal{K}_{1}R_{2\theta} + k\mathcal{K}_{2}\left(R_{\psi}u_{0\theta\theta} + \left(\frac{1}{2}k_{X} + R_{\psi}\right)R_{1\theta}\right) &\notin \mathrm{O}\left(\frac{1}{\epsilon}\right),
	\end{align*}
	and so on for higher order terms. Furthermore, we suppose that these remainder terms do not affect the overall expansion to $ \mathrm{O}(\epsilon) $, which we will justify in \Cref{lemma:app1}. To prove that our expansion coincides with the result in \cite{BIHS2021}, first we suppose
	\begin{align*}
		u_{0} = \sum_{n=-\infty}^{\infty}u_{0n}(X,T)e^{in\theta}.
	\end{align*}
	Term-by-term differentiation, under suitable assumptions on the continuity of $ u_{0} $ and convergence of the series, allows us to conclude that
	\begin{align*}
		u_{0\theta} &= \sum_{n=-\infty}^{\infty}inu_{0n}(X,T)e^{in\theta}\\
		u_{0X} &= \sum_{n=-\infty}^{\infty}u_{0nX}(X,T)e^{in\theta}\\
		u_{0\theta\theta} &= \sum_{n=-\infty}^{\infty}-n^{2}u_{0n}(X,T)e^{in\theta}.
	\end{align*}
	The function $ k\mathcal{K}*_{\theta}u_{0\theta} $ is periodic and continuous in $ \theta $, so the $ n $th Fourier coefficient is:
	\begin{align*}
		(k\mathcal{K}*_{\theta}u_{0\theta})_{n} &= \int_{-\pi}^{\pi}\int_{\R}kK(\xi)u_{0\theta}(\theta - k\xi,X,T)e^{-in\theta}d\xi d\theta\\
		&= \int_{\R}inkK(\xi)u_{0n}(X,T)e^{-ink\xi}d\xi\\
		&= i\Omega(nk)u_{0n}(X,T).
	\end{align*}
	Similarly for the other convolution terms, we have:
	\begin{align*}
		&\bigg[\mathcal{K}*_{\theta}u_{0X} - \mathcal{K}_{1}*_{\theta}\left(k_{X}u_{0\theta} + ku_{0X\theta}\right) + \mathcal{K}_{2}*_{\theta}\frac{1}{2}k_{X}ku_{0\theta\theta}\bigg]_{n}\\
		&= \frac{\Omega(nk)}{nk}u_{0nX} - i\left(\frac{\Omega'(nk)}{nk} - \frac{\Omega(nk)}{n^{2}k^{2}}\right)\left(ink_{X}u_{0n} + inku_{0nX}\right) + \frac{1}{2}n^{2}k_{X}k\left(\frac{\Omega''(nk)}{nk} - 2\frac{\Omega'(nk)}{n^{2}k^{2}} + 2\frac{\Omega(nk)}{n^{3}k^{3}}\right)u_{0n}\\
		&= \Omega'(nk)u_{0nX} + \frac{1}{2}nk_{X}\Omega''(nk)u_{0n}\\
		&= \frac{1}{2}\left(\Omega'(nk)u_{0nX} + \partial_{X}(\Omega'(nk)u_{0n})\right).
	\end{align*}
	This agrees with \cite[Lemma 1]{BIHS2021}, where their convolution notation is equivalent to our $ *_{\theta} $. Returning to the derivation of the Whitham equations, we substite the expansion of the convolution operator into \Cref{eq:WExpand}, which yields
	\begin{align*}
		&-kcu_{0\theta} + kf'(u_{0})u_{0\theta} + k\mathcal{K}*_{\theta}u_{0\theta} + \epsilon\bigg(u_{0T} + f'(u_{0})u_{0X} + \mathcal{K}*_{\theta}u_{0X} - \mathcal{K}_{1}*_{\theta}(k_{X}u_{0\theta} + ku_{0X\theta}) + \frac{1}{2}kk_{X}\mathcal{K}_{2}*_{\theta}u_{0\theta\theta}\\
		&+ f''(u_{0})u_{1}u_{0\theta} -kcu_{1\theta} + kf'(u_{0})u_{1\theta} + k\mathcal{K}*_{\theta}u_{1\theta}\bigg) + \mathrm{O}(\epsilon^{2}) = 0.
	\end{align*}
	We identify the travelling wave ODE at $ \mathrm{O}(1) $, so for a fixed $ (X,T) $ we take $ u_{0} $ to be a periodic travelling wave solution of \Cref{e:Whitham}. At $ \mathrm{O}(\epsilon) $, we have:
	\begin{align*}
		-\mathcal{A}[u_{0}]u_{1} = u_{0T} + f'(u_{0})u_{0X} + \mathcal{K}*_{\theta}u_{0X} - \mathcal{K}^{(1)}*_{\theta}(k_{X}u_{0\theta} + ku_{0X\theta}) + \frac{1}{2}kk_{X}\mathcal{K}^{(2)}u_{0\theta\theta}.
	\end{align*}
	The Fredholm alternative requires that
	\begin{align}
		\left(u_{0T} + f'(u_{0})u_{0X} + \mathcal{K}*_{\theta}u_{0X} - \mathcal{K}^{(1)}*_{\theta}(k_{X}u_{0\theta} + ku_{0X\theta}) + \frac{1}{2}kk_{X}\mathcal{K}^{(2)}u_{0\theta\theta}\right)\perp\ker(\mathcal{A}^{\dagger}). \label{e:fredholmwhitham}
	\end{align}
	Recall from \Cref{P:gkerper} that  $ \ker(\mathcal{A}^{\dagger}) = \Span\{1,u_{0}\} $. Hence we derive the Whitham modulation equations from these solvability conditions by taking the $ L^{2}_{\rm per}[0,2\pi) $ inner product of the kernel elements with the expression in \Cref{e:fredholmwhitham}. For the kernel element $ 1 $ we have:
	\begin{align*}
		\langle 1,u_{0T} + f'(u_{0})u_{0X} + \mathcal{K}*_{\theta}u_{0X}\rangle &= 0,
	\end{align*}
	where the convolution terms with $ \theta $-derivatives vanish by Fubini's theorem and the periodicity of $ u_{0} $. If we assume that $ \partial_{T} $ commutes with the inner product, we can recognize that $ M_{T} = \langle 1, u_{0} \rangle_{T} $ and hence this equation reduces to
	\begin{align*}
		M_{T} &= -\langle 1, f(u_{0})_{X} + \mathcal{K}*_{\theta}u_{0X} \rangle.
	\end{align*}
	Making the further assumption that $ \Omega'(0) = 0 $ and identifying $ u_{0} $ as the limiting sum of its Fourier series in $ \theta $ with fixed $ (X,T) $, this equation is identical to the respective modulation equation in \cite{BIHS2021}. This restriction is unnecessary, and we continue our analysis without it. The second modulation equation follows similarly
	\begin{align*}
		P_{T} &= -\langle u_{0},f'(u_{0})u_{X} \rangle - \langle u_{0},\mathcal{K}*_{\theta}u_{0X} - \mathcal{K}_{1}*_{\theta}(k_{X}u_{0\theta} + ku_{0X\theta}) + \frac{1}{2}kk_{X}\mathcal{K}_{2}u_{0\theta\theta} \rangle.
	\end{align*}
	These equations are equivalent to \Cref{e:whitmodsystem}, and one can arrive at the reduced system \Cref{eq:WhithamMatrix} without considering the Fourier transform at all.
	\par To justify the assumption that we may neglect all the convolution terms involving remainders at $ \rm O(\epsilon) $, we note that the formal procedure for determining modulational instability from the first order Whitham modulation equations involves linearizing the equations around a constant solution \cite{Whitham1999,JZ2010,JMMP2014,JP2020,CM2021}. We may restrict the class of functions $ u(\theta,X,T) $ so that $ u $ is bounded in $ X,T $ in order to derive the modulation equations to first order. However, the following lemma outlines a weaker set of assumptions which also yield the desired vanishing of remainder terms at $ \mathrm{O}(\epsilon) $.
	\begin{lemma}\label{lemma:app1}
		Suppose a function $ h\left(X-\epsilon\xi,T\right) $ is increasing in its first variable for $ \abs{\xi} > \xi_{h}(\epsilon) $ for each $ |\epsilon| \leq 1 $. Further, $ h $ satisfies:
		\begin{align*}
			\lim_{\epsilon\rightarrow 0}h(X-\epsilon\xi,T) = 0,
		\end{align*}
		where the limit is understood in a pointwise sense.	Also suppose that $ \xi^{j}K(\xi)y(\theta - k\xi,X,T)h\left(X-\epsilon\xi,T\right) \in L^{1}(\R) $ for each $ |\epsilon| \leq 1 $ and for an integer $ 0\leq j \leq 2 $. Here, $ y $ is $ 2\pi $-periodic and continuous in $ \theta $. Then the limit as $ \epsilon\rightarrow 0 $ commutes with the integral $ \mathcal{K}_{j}*yh $, and in particular we have:
		\begin{align*}
			\lim_{\epsilon \rightarrow 0}\mathcal{K}_{j}*yh = 0.
		\end{align*}
	\end{lemma}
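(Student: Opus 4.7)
The plan is to establish the result by the Dominated Convergence Theorem. First I would verify pointwise convergence of the integrand: for each fixed $\xi$, as $\epsilon\to 0$ the argument $X-\epsilon\xi \to X$, so the hypothesis $\lim_{\epsilon\to 0} h(X-\epsilon\xi,T)=0$ forces
\[
\xi^{j}K(\xi)y(\theta - k\xi,X,T)\,h(X-\epsilon\xi,T) \longrightarrow 0
\]
pointwise in $\xi$, since the other factors are $\epsilon$-independent.

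The main work is producing an $\epsilon$-uniform, $L^{1}$ dominating function. I would split the real line into $|\xi|\leq \xi_{0}$ and $|\xi|>\xi_{0}$ for a threshold $\xi_{0}$ chosen large enough that the monotonicity hypothesis on $h$ applies for every $|\epsilon|\leq 1$ in the outer region (in particular, $\xi_{0} \geq \sup_{|\epsilon|\leq 1}\xi_{h}(\epsilon)$, which I would argue is finite in the relevant parameter regime or else carry the supremum along). On the bounded piece, $h(\,\cdot\,,T)$ attains only values on the compact set $\{X-\epsilon\xi : |\epsilon|\leq 1,\ |\xi|\leq \xi_{0}\}$, so a uniform bound $|h(X-\epsilon\xi,T)|\leq C_{0}$ holds and $C_{0}|\xi^{j}K(\xi)y(\theta-k\xi,X,T)|\chi_{[-\xi_0,\xi_0]}(\xi)$ is integrable by Assumption \ref{assumption:a1} together with the boundedness of the continuous, periodic $y$.

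On the outer piece, I would exploit the monotonicity hypothesis on $h$ in its first variable. For $|\epsilon|\leq 1$ the point $X-\epsilon\xi$ lies between $X$ and $X-\xi$, and monotonicity of $h(\,\cdot\,,T)$ on the interval delineated by $|\xi|>\xi_{0}$ then yields $|h(X-\epsilon\xi,T)|\leq |h(X-\xi,T)| + |h(X,T)|$ (the additive constant absorbing the finite boundary value at $X$). Combined with the hypothesis that $\xi^{j}K(\xi)y(\theta-k\xi,X,T)h(X-\xi,T)\in L^{1}(\mathbb{R})$ at $\epsilon = 1$, this furnishes the dominating function on $|\xi|>\xi_{0}$. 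Merging the two pieces gives an $\epsilon$-uniform $L^{1}$ dominator, and the Dominated Convergence Theorem then lets the $\epsilon\to 0$ limit pass inside the integral, delivering $\lim_{\epsilon\to 0}\mathcal{K}_{j}\ast yh = 0$.

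The main obstacle is unpacking the monotonicity hypothesis correctly: the statement entangles the behavior of $h$ as a function of its own argument with the parameter $\xi$ and the scaling $\epsilon$, so care is needed to confirm that the same threshold $\xi_{0}$ works uniformly for $|\epsilon|\leq 1$ and that the domination $|h(X-\epsilon\xi,T)|\lesssim |h(X-\xi,T)|$ genuinely holds on the outer region. Once this uniform comparison is justified, the rest of the argument is a routine DCT application.
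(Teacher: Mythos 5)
Your overall strategy---pointwise convergence of the integrand plus an $\epsilon$-uniform $L^{1}$ dominating function, then the Dominated Convergence Theorem---is exactly the paper's. The one genuine gap is the choice of a single threshold $\xi_{0}\geq \sup_{|\epsilon|\leq 1}\xi_{h}(\epsilon)$: that supremum is infinite. The monotonicity hypothesis is phrased in the \emph{first variable} $X-\epsilon\xi$ of $h$, so the threshold in $\xi$ scales like $\xi_{h}(\epsilon)=\xi_{h}(1)/\epsilon$, which blows up as $\epsilon\to 0$; your fallback that it ``is finite in the relevant parameter regime'' therefore cannot be carried out, and on the band $\xi_{0}<|\xi|<\xi_{h}(\epsilon)$ your monotone comparison $|h(X-\epsilon\xi,T)|\lesssim|h(X-\xi,T)|$ is not justified. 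The paper's fix is to make the split $\epsilon$-dependent while keeping the dominating function fixed: writing $f_{\epsilon}(\xi)=h(X-\epsilon\xi,T)$, the scaling identity $f_{\epsilon}(\xi)=f_{1}(\epsilon\xi)$ shows that for $|\xi|<\xi_{h}(1)/\epsilon$ the point $X-\epsilon\xi$ lies in the \emph{fixed} compact set $\{z:|X-z|\leq\xi_{h}(1)\}$, whence $|f_{\epsilon}(\xi)|\leq m:=\max_{|\zeta|\leq\xi_{h}(1)}|h(X-\zeta,T)|$, while for $|\xi|\geq\xi_{h}(1)/\epsilon$ the monotonicity of $f_{1}$ beyond $\xi_{h}(1)$ together with $|\epsilon\xi|\leq|\xi|$ gives $|f_{\epsilon}(\xi)|\leq|f_{1}(\xi)|$. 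Hence $g(\xi)=m+|h(X-\xi,T)|$ dominates uniformly in $\epsilon$, and its integrability against $\xi^{j}K(\xi)y$ follows from the $\epsilon=1$ integrability hypothesis together with $\mathcal{K}_{j}*_{\theta}y\in L^{1}(\R)$ via Minkowski's inequality. With that correction your argument closes; the remaining steps (pointwise limit, compactness bound on the inner region, appeal to the $\epsilon=1$ hypothesis) coincide with the paper's proof.
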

	\begin{proof}
		If $ h(X-\epsilon\xi,T) $ is bounded in $ \xi $, then for fixed $ X,T $ we instantly have a dominating function $ g = \sup_{\xi}h < \infty $. Proceeding under the assumption that $ h(X-\epsilon\xi,T) $ is an increasing, unbounded function for $ \abs{\xi} > \xi_{h}(\epsilon) $ for each $ |\epsilon| \leq 1 $, we note that for the functions
		\begin{align*}
			f_{n}(\xi,X,T) &:=h(X-\epsilon_{n}\xi,T)\\
			\lim_{n\rightarrow \infty}\epsilon_{n} &= 0,
		\end{align*}
		we have
		\begin{align*}
			f_{1}(\epsilon_{n}\xi,X,T) = f_{n}(\xi,X,T).
		\end{align*}
		Without loss of generality we let $ \epsilon_{1} = 1 $, so that the function $ f_{1} $ is assumed to be increasing for $ \abs{\xi} > \xi_{h}(1) $. Note that $ \xi_{h}(\epsilon_{n}) = \frac{\xi_{h}(1)}{\epsilon_{n}} $. A dominating function for $ f_{1} $ is:
		\begin{align*}
			g(\xi,X,T) &:= m + \abs{f_{1}(\xi,X,T)},\\
			m &:= \max_{\abs{\xi} < \xi_{h}(1)}\abs{f_{1}(\xi,X,T)}.
		\end{align*}
		This function dominates all $ f_{n} $, since:
		\begin{align*}
			\abs{f_{n}} \leq \begin{cases}
				m \text{\; when $ \abs{\xi} <  \frac{\xi_{h}(1)}{\epsilon_{n}}$}\\
				f_{1}(\xi,X,T) \text{\; otherwise}
			\end{cases}.
		\end{align*}
		Finally, when taken as part of the convolution, we have
		\begin{align*}
			\norm{\mathcal{K}_{j}*yg}_{L^{1}} &= \int_{\R}\abs{\xi^{j}K(\xi)y(\theta - k\xi,X,T)(M + \abs{h(X-\xi,T)})}d\xi\\
			&\leq m\norm{\mathcal{K}_{j}*_{\theta}y}_{L^{1}} + \norm{\mathcal{K}_{j}*_{\theta}yh}_{L^{1}}\\
			&< \infty,
		\end{align*}
		where we have used Minkowski's inequality, and also the fact that $ y $ is bounded in its first variable which implies $ \mathcal{K}_{j}*_{\theta}y \in L^{1}(\R) $. So by the dominated convergence theorem, we have:
		\begin{align*}
			\lim_{\epsilon\rightarrow 0}\mathcal{K}_{j}*yh &= 0.
		\end{align*}
	\end{proof}
	\begin{remark}
		If one assumes a particular form for the convolution function $ K(\xi) $, there may be weaker conditions which ensure the vanishing of the remainder terms. For instance, the uniform integrability and tightness of $ \xi^{j}K(\xi)f_{n} $ will allow for the limit as $ \epsilon\rightarrow 0 $ to commute with the integral.
	\end{remark}
	To apply \Cref{lemma:app1}, recall the remainder terms in the O$ (\epsilon) $ expansion of the convolution:
	\begin{align*}
		\mathcal{K}_{1}*_{\theta}R_{2\theta},\quad\mathcal{K}_{2}*_{\theta}R_{\psi}u_{0\theta\theta},\quad \mathcal{K}_{2}*_{\theta}R_{1\theta},\quad \mathcal{K}_{2}*_{\theta}R_{\psi}R_{1\theta}.
	\end{align*}
	Provided that the functions
	\begin{align*}
		\max_{\theta}R_{2\theta},\quad R_{\psi},\quad \max_{\theta}R_{1\theta},\quad R_{\psi}\max_{\theta}R_{1\theta}
	\end{align*}
	each satisfy the assumptions of \Cref{lemma:app1} on $ h $ for the appropriate convolution $ \mathcal{K}_{j} $, then all the remainder terms vanish from the O$ (\epsilon) $ expansion. To use the Whitham equation as an example, where
	\begin{align*}
		\widehat{K}(q) = \sqrt{\frac{\tanh q}{q}},
	\end{align*}
	we have that $ K(x) \sim \left(\frac{1}{2}\pi^{2}x\right)^{-\frac{1}{2}}e^{-\frac{1}{2}x} $ as $ x\rightarrow \infty $ \cite{Whitham1967,EW2019}. The theory we have developed would allow $ u(\theta,X,T) \in o(\sqrt{X}e^{\frac{1}{2}X}) $ as a solution to the Whitham equations. In particular, if $ u(\theta,X,T) \in \rm O(e^{(\frac{1}{2} - \beta)X}) $ as $ X \rightarrow \infty $ for $ 0 < \beta < \frac{1}{2} $, provided that $ \psi $ is chosen so that $ \xi K(\xi)R_{\psi}(X-\epsilon\xi,T) \in L^{1}(\R)$, then we can take $ R_{1},R_{2} \in {\rm O}(e^{(\frac{1}{2} - \beta)X}) $, from which the remainders converge to zero as $ \epsilon \rightarrow 0 $.
\end{appendices}

\bibliographystyle{abbrv}
\bibliography{Whitham.bib}

\end{document}